\documentclass{llncs}
\usepackage{amsfonts,amsmath,latexsym}
\usepackage{enumerate,enumitem}
\usepackage{mathtools,marvosym}

\usepackage{graphicx}
\DeclareGraphicsExtensions{.pdf}
\graphicspath{ {figs/} }


\pagestyle{plain}

\let\doendproof\endproof
\renewcommand\endproof{~\hfill\qed\doendproof}

\allowdisplaybreaks

\newcommand{\Dsep}{\ensuremath{D_{\operatorname{sep}}}}

\newcommand{\Dlocstar}{\ensuremath{D_{\operatorname{loc-star}}}}

\newtheorem{oldtheorem}{Theorem}

\spnewtheorem*{proofidea}{Proof idea}{\itshape}{}

\title{The number of crossings in multigraphs with no empty lens%
\thanks{A preliminary version~\cite{GD2018version} appeared in the proceedings of the 26th International Symposium on Graph Drawing and Network Visualization, GD 2018.}
}

\author{
Michael Kaufmann\inst{1}
\and
J\'anos Pach\thanks{Supported by NKFIH grants KKP-133864, K-131529, Austrian Science Fund Z 342-N31, Ministry of Education and Science of the Russian Federation MegaGrant No. 075-15-2019-1926, ERC Advanced Grant 882971 ``GeoScape.''}\inst{2}
\and
G\'eza T\'oth\thanks{Supported by National Research, Development and Innovation Office, NKFIH, KKP-133864, K-131529 and ERC Advanced Grant ``GeoScape''882971.}\inst{3}
\and
Torsten Ueckerdt$^{\text{\Letter}}$\inst{4}
}

\institute{
Wilhelm-Schickard-Institut f\"ur Informatik, Universit\"at T\"ubingen, Germany \email{mk@informatik.uni-tuebingen.de} \and
R\'enyi Institute, Budapest, Hungary and MIPT, Moscow, Russian Federation \email{pach@cims.nyu.edu} \and
R\'enyi Institute, Budapest, Hungary \email{toth.geza@renyi.mta.hu} \and
Karlsruhe Institute of Technology (KIT), Institute of Theoretical Informatics, Germany \email{torsten.ueckerdt@kit.edu}
}

\begin{document}

\maketitle

\begin{abstract}
 Let $G$ be a multigraph with $n$ vertices and $e>4n$ edges, drawn in the plane such that any two parallel edges form a simple closed curve with at least one vertex in its interior and at least one vertex in its exterior.
 Pach and T\'oth (A Crossing Lemma for Multigraphs, \textit{SoCG 2018}) 
 extended the Crossing Lemma of Ajtai \textit{et al.} (Crossing-free subgraphs, \textit{North-Holland Mathematics Studies}, 1982)
 and Leighton (Complexity issues in VLSI, \textit{Foundations of computing series}, 1983)
 by showing that if no two adjacent edges cross and every pair of nonadjacent edges cross at most once, then the number of edge crossings in $G$ is at least $\alpha e^3/n^2$, for a suitable constant $\alpha>0$.
 The situation turns out to be quite different if nonparallel edges are allowed to cross any number of times.
 It is proved that in this case the number of crossings in $G$ is at least $\alpha e^{2.5}/n^{1.5}$.
 The order of magnitude of this bound cannot be improved.
\end{abstract}

\section{Introduction}
In this paper, multigraphs may have parallel edges but no loops.
A topological graph (or multigraph) is a graph (multigraph) $G$ drawn in the plane with the property that every vertex is represented by a point and every edge $uv$ is represented by a curve (continuous arc) connecting the two points corresponding to the vertices $u$ and $v$.
We assume, for simplicity, that the points and curves are in ``general position'', that is, (a) no vertex is an interior point of any edge; (b) any pair of edges intersect in at most finitely many points; (c) if two edges share an interior point, then they properly cross at this point; and (d) no 3 edges cross at the same point.
Throughout this paper, every multigraph $G$ is a topological multigraph, that is, $G$ is considered with a fixed drawing that is given from the context.
In notation and terminology, we then do not distinguish between the vertices (edges) and the points (curves) representing them.
The number of crossing points in the considered drawing of $G$ is called its {\em crossing number}, denoted by ${\rm cr}(G)$.
(I.e., ${\rm cr}(G)$ is defined for topological multigraphs rather than abstract multigraphs.)

The classic ``crossing lemma'' of Ajtai, Chv\'atal, Newborn, Szemer\'edi~\cite{ACNS82} and Leighton~\cite{L83} gives an asymptotically best-possible lower bound on the crossing number in any $n$-vertex $e$-edge topological graph without loops or parallel edges, provided $e > 4n$.

\begin{oldtheorem}[Crossing Lemma, Ajtai \textit{et al.}~\cite{ACNS82} and Leighton~\cite{L83}]
 There is an absolute constant $\alpha >0$, such that for any $n$-vertex $e$-edge topological graph $G$ we have
 \[
  {\rm cr}(G) \geq \alpha \frac{e^3}{n^2}, \qquad \text{provided } e > 4n.
 \]
\end{oldtheorem}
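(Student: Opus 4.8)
The plan is to give the classical two-step proof of the Crossing Lemma: first establish a weak \emph{linear} lower bound on the crossing number via Euler's formula, and then amplify it to the cubic bound by a probabilistic (random vertex sampling) argument.

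\emph{Step 1: the Euler bound.} First I would show that every $n$-vertex $e$-edge topological graph $G$ satisfies $\mathrm{cr}(G) \ge e - 3n$. Suppose this fails; then in particular $e - 3n > 0$, so $n \ge 3$. Deleting one edge from each crossing of $G$ produces a planar topological graph on $n$ vertices with $e - \mathrm{cr}(G)$ edges, whence $e - \mathrm{cr}(G) \le 3n - 6$ by the standard consequence of Euler's formula, i.e. $\mathrm{cr}(G) \ge e - 3n + 6 > e - 3n$, a contradiction. (The inequality is trivially true whenever its right-hand side is nonpositive, in particular when $n < 3$.)

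\emph{Step 2: random sampling.} Fix a parameter $p \in (0,1]$, to be chosen later, and let $G_p$ be the random topological subgraph of $G$ obtained by retaining each vertex independently with probability $p$ and keeping every edge both of whose endpoints survive, together with the inherited drawing. If $n_p$, $e_p$, $\mathrm{cr}_p$ denote the numbers of vertices, edges and crossings of $G_p$, then by linearity of expectation $\mathbb{E}[n_p] = pn$, $\mathbb{E}[e_p] = p^2 e$, and $\mathbb{E}[\mathrm{cr}_p] \le p^4\,\mathrm{cr}(G)$, since a given crossing of $G$ survives only if the (at most four) endpoints of the two crossing edges all survive. Applying Step 1 to each outcome of $G_p$ and taking expectations gives
\[
 p^4\,\mathrm{cr}(G) \;\ge\; \mathbb{E}[\mathrm{cr}_p] \;\ge\; \mathbb{E}[e_p] - 3\,\mathbb{E}[n_p] \;=\; p^2 e - 3 p n ,
\]
hence $\mathrm{cr}(G) \ge e/p^2 - 3n/p^3$.

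\emph{Step 3: optimization.} Finally I would set $p := 4n/e$, which lies in $(0,1]$ exactly because of the hypothesis $e > 4n$. Substituting yields $\mathrm{cr}(G) \ge \tfrac{e^3}{16 n^2} - \tfrac{3 e^3}{64 n^2} = \tfrac{e^3}{64 n^2}$, so the theorem holds with $\alpha = \tfrac{1}{64}$. I do not expect a genuine obstacle, since this is the textbook argument, but two points need care. First, the estimate $\mathbb{E}[\mathrm{cr}_p] \le p^4\,\mathrm{cr}(G)$ presumes that each crossing involves four distinct vertices; a crossing between two adjacent edges involves only three and survives with probability $p^3$, so one should either observe that such crossings can be removed without creating new ones (hence we may assume there are none), or simply replace $p^4$ by $p^3$ throughout, which still gives the bound with a smaller positive constant. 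Second, the whole scheme is vacuous unless $p \le 1$, and ensuring this is precisely the role of the assumption $e > 4n$.
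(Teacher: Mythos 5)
Your main argument is the standard probabilistic (random vertex sampling) proof of the Crossing Lemma, and it is correct: the Euler bound ${\rm cr}(G)\ge e-3n$, amplification via a $p$-random induced subdrawing, and the choice $p=4n/e$ giving $\alpha=1/64$ all check out. Note, however, that the paper offers no proof of this statement at all --- it is quoted as a known result of Ajtai et al.\ and Leighton --- so there is no in-paper proof to match. What is worth comparing is methodology: the machinery the paper actually builds (Theorem~\ref{thm:general-drawing-style}) follows the \emph{other} classical route to the Crossing Lemma, Leighton's bisection-width decomposition, in which one repeatedly splits the drawing along small edge cuts and plays the surviving edge count against a density bound such as~\ref{enum:general-edge-count}. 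The authors cannot use your sampling argument in their setting because deleting vertices destroys the separated property (a lens becomes empty once the vertex inside it is removed) --- precisely the failure of monotonicity under vertex removal that they point out --- whereas your argument is shorter and gives an explicit constant in the classical simple-graph case.

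One concrete correction to a side remark: your fallback of ``simply replace $p^4$ by $p^3$ throughout'' does not rescue the argument. With ${\mathbb E}[{\rm cr}_p]\le p^3\,{\rm cr}(G)$ the amplification yields ${\rm cr}(G)\ge e/p-3n/p^2$, which at $p=4n/e$ gives only $e^2/(16n)$, an order of magnitude weaker than $e^3/n^2$ (since $e^3/n^2=(e/n)\cdot e^2/n$). Your first fix is the right one: repeatedly swap the two arcs at a crossing of adjacent edges (shortcutting any resulting self-intersections), which strictly decreases the number of crossings and never increases it, so one may assume adjacent edges do not cross and every crossing survives with probability exactly $p^4$.
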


In general, the Crossing Lemma does not hold for topological multigraphs with parallel edges, as for every $n$ and $e$ there are $n$-vertex $e$-edge topological multigraphs $G$ with ${\rm cr}(G) = 0$.
Sz\'ekely proved the following variant for multigraphs by restricting the edge multiplicity, that is the maximum number of pairwise parallel edges, in $G$ to be at most $m$.
In fact, the statement holds with the same constant $\alpha$ as the original Crossing Lemma~\cite{PT97}.

\begin{oldtheorem}[Sz\'ekely~\cite{Sz97}]\label{thm:multiplicity-crossing-lemma}
 There is an absolute constant $\alpha > 0$ such that for any $m \geq 1$ and any $n$-vertex $e$-edge topological multigraph $G$ with edge multiplicity at most~$m$ we have
 \[
  {\rm cr}(G) \geq \alpha \frac{e^3}{mn^2}, \qquad \text{provided } e\geq 5mn.
 \]
\end{oldtheorem}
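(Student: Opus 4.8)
To prove Theorem~\ref{thm:multiplicity-crossing-lemma}, the plan is to deduce it from the ordinary Crossing Lemma (Theorem~A) by passing to a random \emph{simple} subgraph and tracking how the number of crossings shrinks against how the number of edges shrinks. Write $m_{uv}$ for the number of edges of $G$ joining $u$ and $v$, so that $0\le m_{uv}\le m$ and $e=\sum_{uv}m_{uv}$. I would build a random simple topological graph $G_1$ as follows: for every pair $\{u,v\}$ with $m_{uv}\ge 1$, independently \emph{keep} it with probability $m_{uv}/m$, and if it is kept, place into $G_1$ exactly one of the $m_{uv}$ parallel $uv$-edges of $G$, chosen uniformly at random and drawn as in $G$. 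Then $G_1$ has at most one edge between any two vertices and no loop, hence is a simple topological graph on at most $n$ vertices, to which Theorem~A applies; moreover $\mathbb{E}\,|E(G_1)|=\sum_{uv}m_{uv}/m=e/m>4n$.

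The crux is a pair of expectation estimates. First, $\mathbb{E}\,|E(G_1)|=e/m$, as just noted. Second, consider any crossing of $G$, between edges $f_1$ and $f_2$. If $f_1$ and $f_2$ are parallel, then $G_1$ contains at most one of them and the crossing is never inherited. If $f_1$ and $f_2$ are not parallel, they lie in two distinct parallel classes, whose pairs are treated independently in the construction; and $f_i\in G_1$ exactly when the class of $f_i$ is kept (probability $m_{u_iv_i}/m$) \emph{and} $f_i$ is the representative drawn from it (conditional probability $1/m_{u_iv_i}$), so $\Pr[f_i\in G_1]=1/m$. Hence this crossing is inherited with probability exactly $1/m^2$, and summing over all crossings of $G$ gives $\mathbb{E}\,{\rm cr}(G_1)\le {\rm cr}(G)/m^2$.

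Now I combine the two. From Theorem~A together with the trivial bound ${\rm cr}\ge 0$ on sparse graphs, there is an absolute constant $c>0$ such that ${\rm cr}(H)\ge \alpha\,|E(H)|^3/\nu^2-c\,\nu$ for \emph{every} simple topological graph $H$ with $\nu$ vertices. Applying this to $G_1$ (which has at most $n$ vertices), taking expectations, and using the convexity of $t\mapsto t^3$ (Jensen's inequality),
\[
 \frac{{\rm cr}(G)}{m^2}\ \ge\ \mathbb{E}\,{\rm cr}(G_1)\ \ge\ \frac{\alpha}{n^2}\bigl(\mathbb{E}\,|E(G_1)|\bigr)^3-c\,n\ =\ \frac{\alpha e^3}{m^3n^2}-c\,n,
\]
and therefore ${\rm cr}(G)\ge \alpha e^3/(mn^2)-c\,m^2n$. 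The whole point is this asymmetry: replacing $G$ by $G_1$ costs a factor $m^2$ in the number of crossings but a factor $m^3$ inside the cube of the edge count, so one net factor of $m$ is gained over what the multiplicity-$m$ analogue of the Euler bound, ${\rm cr}(G)\ge e-3mn$, would give after the usual probabilistic amplification.

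The remaining issue is to absorb the error term $c\,m^2n$, and I expect this to be the only delicate part. If $e\ge Cmn$ for a suitably large absolute constant $C$, the hypothesis already makes $c\,m^2n$ at most half of $\alpha e^3/(mn^2)$, and we are done with $\alpha'=\alpha/2$. For the narrow range $4mn<e<Cmn$ one should not pass through the lossy ``clean form'' above; instead, note that $\mathrm{Var}\,|E(G_1)|\le \mathbb{E}\,|E(G_1)|=e/m$ since $|E(G_1)|$ is a sum of independent indicators, and (rescaling the keeping probabilities by the factor $m/\max_{uv}m_{uv}\ge 1$ when the largest multiplicity is much smaller than $m$) arrange that $|E(G_1)|>4n$ with at least constant probability; then $\mathbb{E}\,{\rm cr}(G_1)=\Omega\!\bigl(\alpha(e/m)^3/n^2\bigr)$ with no subtracted term, and the few residual configurations---one dominant, nearly full parallel class, or $n$ bounded---are handled directly. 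The main mechanism, the $m^2$-versus-$m^3$ accounting, is robust; it is only this threshold bookkeeping near $e=4mn$ that is fiddly.
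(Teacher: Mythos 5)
Your proposal is correct in its main mechanism but follows a genuinely different route from the paper. The paper derives Theorem~\ref{thm:multiplicity-crossing-lemma} as an application of its Generalized Crossing Lemma (Theorem~\ref{thm:general-drawing-style}): it defines the drawing style $D_m$ (no $m+1$ pairwise parallel edges), checks \ref{enum:planar-edge-count} with $k_1=3m$ and \ref{enum:general-edge-count} with $b=2$, $k_3=m$, and establishes \ref{enum:bisection-width} with an $m$-independent $k_2$ by rerouting parallel bundles to make every lens empty, placing a dummy vertex in each lens, and invoking the bisection-width bound for separated drawings (Lemma~\ref{lem:separated-bisection}); the $m$-dependence $e^3/(mn^2)$ then comes out of the factor $k_3^{-x(b)}$. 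You instead use the classical probabilistic amplification: sample one representative per parallel class so that each edge survives with probability $1/m$, observe that crossings survive with probability at most $1/m^2$ while the cube of the expected edge count shrinks by $m^3$, and apply the ordinary Crossing Lemma to the random simple subgraph. Your route is more elementary and self-contained (it needs only Theorem~A as a black box, no bisection-width or decomposition machinery), whereas the paper's route is chosen to showcase its general framework and incidentally yields the slightly better threshold $e>(3m+1)n$. All the expectation computations in your argument are correct, including the Jensen step and the fact that parallel--parallel crossings never survive.

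The one place your sketch is not nailed down is the range $4mn<e<Cmn$. The variance/anti-concentration argument you gesture at (forcing $|E(G_1)|>4n$ with constant probability) can be made to work but needs a genuine anti-concentration estimate for a Poisson-binomial variable whose mean may exceed $4n$ by as little as $1/m$, and the ``rescaling'' remark does not obviously resolve this. A cleaner way to close this case with the same random subgraph is to amplify the linear bound instead of the cubic one: since ${\rm cr}(H)\ge |E(H)|-3\nu$ for every simple topological graph, taking expectations gives ${\rm cr}(G)/m^2\ge e/m-3n$, i.e.\ ${\rm cr}(G)\ge me-3m^2n>m^2n$ whenever $e>4mn$; and in the narrow range $e<Cmn$ one has $e^3/(mn^2)<C^3m^2n$, so $m^2n\ge C^{-3}e^3/(mn^2)$ and you are done with $\alpha'=\min(\alpha/2,\,C^{-3})$. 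With that substitution your proof is complete.
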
 

Recently, Pach and T\'oth extended the Crossing Lemma to so-called branching multigraphs~\cite{PT20}, and together with Tardos to so-called non-homotopic multigraphs~\cite{PTT20}.
We say that a topological multigraph is
\begin{itemize}
 \item {\em separated} if any pair of parallel edges form a simple closed curve with at least one vertex in its interior and at least one vertex in its exterior,
 \item {\em single-crossing} if any pair of edges cross at most once (that is, edges sharing $k$ endpoints, $k \in \{0,1,2\}$, may have at most $k+1$ points in common), 
 \item {\em locally starlike} if no two adjacent edges cross (that is, edges sharing $k$ endpoints, $k \in \{1,2\}$, may not cross), and 
 \item {\em non-homotopic} if no two parallel edges can be continuously transformed into each other without passing through a vertex.
\end{itemize}
A topological multigraph is {\em branching} if it is separated, single-crossing and locally starlike.
Thus every branching drawing is separated, and every separated drawing is non-homotopic.
However, the converse is not true.
The edge multiplicity of a branching multigraph may be as high as $n-2$, while a non-homotopic multigraph with two vertices can already have arbitrarily many edges.

\begin{figure}
 \centering
 \includegraphics{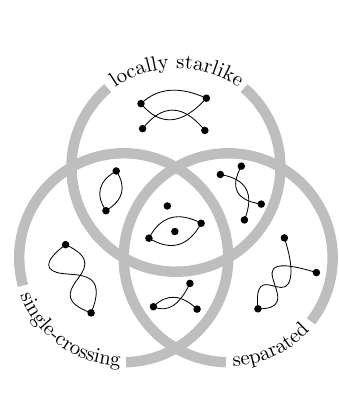}
 \caption{Illustrating some drawing styles of topological multigraphs. A branching drawing is separated, single-crossing and locally starlike.}
 \label{fig:drawing-styles}
\end{figure}

\begin{oldtheorem}[Pach and T\'oth~\cite{PT20}]\label{thm:CL-branching}
 There is an absolute constant $\alpha > 0$ such that for any $n$-vertex $e$-edge branching multigraph $G$ we have
 \[
   {\rm cr}(G) \geq \alpha \frac{e^3}{n^2}, \qquad \text{provided } e > 4n.
 \]
\end{oldtheorem}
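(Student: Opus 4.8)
\emph{Proof plan.}
I would follow the standard two-step template behind Crossing-Lemma-type theorems — a linear ``Euler-type'' lower bound, then probabilistic amplification — with the amplification split according to how the $e$ edges of $G$ are distributed among its parallel classes.

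\emph{Step 1: a linear bound.} First I would establish that ${\rm cr}(G)\ge e-3n+6$ for every branching multigraph $G$. Deleting one edge at each crossing point produces a crossing-free sub-drawing which is still separated — erasing edges does not erase the vertices witnessing separation — and which has at least $e-{\rm cr}(G)$ edges; so it is enough to bound the number of edges of a crossing-free separated multigraph. Here the ``no empty lens'' hypothesis is precisely what is needed: a face of length two would be the interior of the simple closed curve formed by two parallel edges, and by separation (having first discarded isolated vertices) that interior must contain a piece of some edge, a contradiction. With no $2$-faces, the usual count of edge--face incidences together with Euler's formula gives at most $3n-6$ edges, whence ${\rm cr}(G)\ge e-3n+6$. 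In particular, the theorem already holds when $e/n$ is bounded.

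\emph{Step 2: amplification via the class graph.} Let $N$ be the number of parallel classes and write $m_C=|C|$. If $N>4n$, pick one edge uniformly at random from each class; this yields a loopless simple topological graph $\hat G$ on the vertex set of $G$. Since parallel edges of $G$ never cross each other, ${\rm cr}(G)=\sum_{\{C,C'\}}{\rm cr}(C,C')$, where ${\rm cr}(C,C')$ is the number of crossings between the classes; by linearity of expectation $\mathbb E[{\rm cr}(\hat G)]=\sum_{\{C,C'\}}{\rm cr}(C,C')/(m_Cm_{C'})\le{\rm cr}(G)$, and since the Crossing Lemma bounds \emph{every} realisation by ${\rm cr}(\hat G)\ge\alpha N^3/n^2$, we obtain ${\rm cr}(G)\ge\alpha N^3/n^2$. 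Together with Step~1 (which disposes of the leftover case $N\le 4n$, where then $e\le 8n$) this proves the theorem whenever $N\ge e/2$. What remains is the regime $N<e/2$ — many parallel copies — with $e/n$ large; here the naive continuation of deleting vertices with probability $1-p$ and invoking Step~1 fails, because deleting vertices can destroy separation, and falling back on Sz\'ekely's multiplicity version of the Crossing Lemma only gives $\Omega(e^3/n^3)$, which is too weak.

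\emph{Main obstacle: the high-multiplicity regime.} This is where I expect most of the effort to go, and where the separation condition has to be used in earnest. Its two halves pull against each other: a class of multiplicity $m_C$ forces $m_C-1$ pairwise disjoint lenses, each containing a vertex, so large classes pin down many vertices, while the requirement of a vertex in the \emph{exterior} of each class limits how much nested parallel structure the $n$ vertices can carry at once. The approach I would try is to replace each parallel class by a simpler gadget routed through its lens-vertices (for instance the two half-edges $uw$ and $wv$ drawn inside each lens), obtaining an auxiliary multigraph with $\Theta(e)$ edges whose multiplicity and separation are under control and whose crossing number is $O({\rm cr}(G))$ — the new edges lie inside lenses that in $G$ are only crossed by edges already crossing the class — and then to run Step~1 together with vertex-sampling on the gadget. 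Carrying this out — in particular verifying the multiplicity and separation bounds for the gadget, bounding its crossings by a constant multiple of ${\rm cr}(G)$, and then optimising against the estimate of Step~2 — is the technical core of the proof, and the place where the single-crossing and locally-starlike hypotheses, used only lightly above, should become essential.
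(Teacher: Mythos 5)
Your proposal has a genuine gap: everything is carried only up to the point where the real difficulty begins. Step 1 (the linear bound ${\rm cr}(G)\ge e-3n+6$ from the $3n-6$ edge count for crossing-free separated multigraphs) is correct and is exactly the paper's ingredient \ref{enum:planar-edge-count}. Step 2 (sampling one edge per parallel class) correctly disposes of the regime where the number $N$ of parallel classes is $\Omega(e)$. But the complementary high-multiplicity regime $N<e/2$ is the entire content of the theorem --- a branching multigraph can have multiplicity up to $n-2$ and $e=\Theta(n^2)$ --- and for that regime you offer only an unexecuted gadget sketch that you yourself flag as ``the technical core.'' The sketch has concrete unresolved problems: a single vertex $w$ can be the lens witness for many distinct parallel classes at a vertex $u$, so the auxiliary multigraph built from the half-edges $uw_i,w_iv$ may itself have large multiplicity and empty lenses, i.e.\ it need not be any easier than $G$; and bounding its crossings by $O({\rm cr}(G))$ requires a charging argument (an edge entering a lens can meet the gadget path there while its crossings with the class are shared among neighbouring lenses) that is not supplied. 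As written, the proof does not close.

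For comparison, the paper does not use probabilistic amplification at all, precisely because of the obstruction you identify (vertex deletion destroys separation). Instead it follows Leighton's divide-and-conquer route, generalized in Theorem~\ref{thm:general-drawing-style}: a bisection-width bound ${\rm b}_D(G)\le k_2\sqrt{{\rm cr}(G)+\Delta(G)e+n}$ (property \ref{enum:bisection-width}, Lemma~\ref{lem:branching-bisection}) is applied recursively, deleting only \emph{edges} --- which preserves the separated/branching style --- until every part has so few vertices that the quadratic edge bound $e'\le n'(n'-2)$ (property \ref{enum:general-edge-count}, giving $b=2$, $x(b)=1$) caps its edge count; fewer than $e/2$ edges are deleted overall, yielding a contradiction unless ${\rm cr}(G)\ge\alpha e^3/n^2$. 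So the decomposition-by-bisection is the missing amplification mechanism in your argument, and it replaces both your class-sampling step and the gadget construction. If you want to complete a proof along your own lines, you would need either to make the gadget argument rigorous or to import a bisection-type tool of this kind.
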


\begin{oldtheorem}[Pach, Tardos, and T\'oth~\cite{PTT20}]\label{thm:CL-non-homotopic}
 There is an absolute constant $\alpha > 0$ such that for any $n$-vertex $e$-edge non-homotopic multigraph $G$ we have 
 \[
  {\rm cr}(G) \geq \alpha \frac{e^2}{n}, \qquad \text{provided } e > 4n.
 \]
\end{oldtheorem}

Let us also mention that Felsner \textit{et al.}~\cite{FHKP20} recently considered locally starlike drawings of the complete graph on $n$ vertices in which no face of the arrangement is bounded by a $2$-cycle.
They showed that any such drawing contains at most $n!$ crossings.

In this paper we generalize Theorem~\ref{thm:CL-branching} by showing that the Crossing Lemma holds for all topological multigraphs that are separated and locally starlike, but not necessarily single-crossing.
We shall sometimes refer to the separated condition as the multigraph having ``no empty lens,'' where we remark that here a lens is bounded by two entire edges, rather than general edge segments as sometimes defined in the literature.
We also prove a Crossing Lemma variant for separated (and not necessarily locally starlike) multigraphs, where however the term $\alpha \frac{e^3}{n^2}$ must be replaced by $\alpha \frac{e^{2.5}}{n^{1.5}}$.
Both results are best-possible up to the value of constant $\alpha$.
Hence, the Crossing Lemma for separated drawings with $\alpha \frac{e^{2.5}}{n^{1.5}}$ nicely settles between the one for branching drawings with $\alpha \frac{e^3}{n^2}$ (Thm~\ref{thm:CL-branching}) and the one for non-homotopic drawings with $\alpha \frac{e^2}{n}$ (Thm~\ref{thm:CL-non-homotopic}).

\begin{theorem}\label{thm:CL-separated-main}
 There is an absolute constant $\alpha > 0$ such that for any $n$-vertex $e$-edge topological multigraph $G$ with $e > 4n$ we have
 \begin{enumerate}[label = (\roman*)]
  \item ${\rm cr}(G) \geq \alpha \frac{e^3}{n^2}$, if $G$ is separated and locally starlike.\label{enum:CL-sep-nac}
  \item ${\rm cr}(G) \geq \alpha \frac{e^{2.5}}{n^{1.5}}$, if $G$ is separated.\label{enum:CL-separated}
 \end{enumerate}
 Moreover, both bounds are best-possible up to the constant $\alpha$.
\end{theorem}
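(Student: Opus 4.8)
\begin{proofidea}
The plan follows the two-step scheme behind every Crossing-Lemma-type bound --- a linear lower bound, then probabilistic amplification --- but with a twist that is the source of the gap between \ref{enum:CL-sep-nac} and \ref{enum:CL-separated}: the class of separated multigraphs is \emph{not} closed under deleting vertices, so the amplification has to be carried out together with a ``repair'' step whose cost must be controlled. First I would establish the linear bound: every separated topological multigraph with $n\ge 3$ vertices and $e$ edges satisfies ${\rm cr}(G)\ge e-3n+6$. Indeed, in a \emph{crossing-free} separated drawing no face can be bounded by exactly two edges, since such a bigonal face is precisely an empty lens between two parallel edges; hence every face has size at least $3$ and Euler's formula gives $e\le 3n-6$, exactly as for simple planar graphs. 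Deleting one edge through each crossing preserves separatedness --- it changes neither the closed curves formed by the surviving pairs of parallel edges nor which vertices lie inside or outside of them --- and only decreases the number of crossings, so after removing at most ${\rm cr}(G)$ edges one reaches a crossing-free separated submultigraph with at least $e-{\rm cr}(G)$ edges, proving the claim. This already settles both parts for $4n<e\le Cn$, for any fixed $C$, by choosing $\alpha=\alpha(C)$ small enough.

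For part \ref{enum:CL-sep-nac} I would keep each vertex of $G$ independently with probability $p\asymp n/e$, obtaining an induced submultigraph $G_p$, and then repair it to a separated multigraph $G_p'$ by deleting, for each empty lens that has appeared, one of its two bounding edges. Applying the linear bound to $G_p'$ and taking expectations gives $p^4\,{\rm cr}(G)\ge \mathbb{E}[e(G_p')]-3pn$, where $\mathbb{E}[e(G_p')]=p^2e-\mathbb{E}[R]$ and $R$ is the number of edges removed during repair. The heart of the matter --- and the step I expect to be the main obstacle --- is to show that, when $G$ is locally starlike, $\mathbb{E}[R]\le \tfrac12 p^2e$. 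The relevant structure is that the edges of a bundle between two vertices $u$ and $v$ leave $u$ (and $v$) as a non-crossing fan whose consecutive lenses trap $\mu_{uv}-1$ \emph{distinct} vertices; one has to argue that local starlikeness rules out the configurations in which an entire long bundle is wiped out by the loss of only a handful of vertices, so that in expectation only a constant fraction of the $p^2e$ surviving edges are deleted. Granting this, $p^4\,{\rm cr}(G)\ge\tfrac12 p^2e-3pn\ge\tfrac14 p^2e$ for $p\asymp n/e$, so ${\rm cr}(G)=\Omega(e^3/n^2)$.

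For part \ref{enum:CL-separated} the repair step can genuinely destroy a constant fraction of the (already sparse) sampled multigraph once the multiplicities grow, which is exactly why the bound drops, so I would instead split on the maximum edge multiplicity $\mu$ of $G$. If $\mu\le\sqrt{e/n}$, then Theorem~\ref{thm:multiplicity-crossing-lemma} applies directly and gives ${\rm cr}(G)\ge\alpha\, e^3/(\mu n^2)\ge\alpha\, e^{2.5}/n^{1.5}$. If $\mu>\sqrt{e/n}$, I would truncate every bundle to at most $\sqrt{e/n}$ of its innermost consecutive edges --- this keeps the drawing separated (the remaining lenses still contain their witnesses) --- and then apply Theorem~\ref{thm:multiplicity-crossing-lemma}, respectively the amplification above, to the truncated multigraph; the point to be verified, which is the main obstacle here, is that this truncation loses only a constant factor in $e$, using that a bundle of multiplicity $\mu_{uv}$ traps $\mu_{uv}-1$ distinct witnesses and hence the high-multiplicity bundles cannot collectively carry too large a share of all the edges. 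The balance point $\mu\asymp\sqrt{e/n}$ is precisely what produces the exponents $2.5$ and $1.5$.

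Finally, tightness. For \ref{enum:CL-sep-nac} there is nothing multigraph-specific to do: a simple graph on $n$ vertices with $e$ edges and only $O(e^3/n^2)$ crossings, which exists by tightness of the classical Crossing Lemma, is vacuously separated and can be redrawn locally starlike. For \ref{enum:CL-separated} I would start from a Crossing-Lemma-extremal simple graph, replace each edge by $m=\lceil\sqrt{e/n}\rceil$ parallel copies, and reroute the copies of each bundle so that consecutive ones enclose distinct vertices; the subtle part --- the last obstacle --- is to realise all of the required witness incidences (each spare vertex must serve many bundles) without creating more than the target $\Theta\bigl(m^2\cdot (e/m)^3/n^2\bigr)=\Theta(e^{2.5}/n^{1.5})$ crossings. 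Since any such example must fail to be locally starlike, this is consistent with \ref{enum:CL-sep-nac}.
\end{proofidea}
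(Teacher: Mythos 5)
Your route---a linear bound followed by probabilistic amplification with a ``repair'' step, plus a dichotomy on the maximum multiplicity for part~\ref{enum:CL-separated}---is genuinely different from the paper's, which follows Leighton's bisection scheme: a generalized Crossing Lemma (Theorem~\ref{thm:general-drawing-style}) is proved by repeatedly bisecting the drawing using a bound ${\rm b}_D(G')\le k_2\sqrt{{\rm cr}(G')+\Delta(G')\cdot e'+n'}$ and playing the total number of deleted edges off against an edge-count bound $e'\le k_3 n'^{\,b}$ (with $b=2$ for separated locally starlike and $b=3$ for separated). The decisive advantage of that scheme is that it only ever deletes \emph{edges}, never vertices, so the separated condition is preserved for free; your sampling deletes vertices, and the two steps you yourself flag as the main obstacles are exactly where the argument breaks, not merely where it is incomplete.

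For~\ref{enum:CL-sep-nac}, the claim $\mathbb{E}[R]\le\tfrac12 p^2e$ is false. Local starlikeness does not prevent a bundle of multiplicity $\mu$ as large as $n-2$ whose $\mu-1$ consecutive lenses each contain exactly one witness, with witnesses shared among bundles (the extremal examples behind Lemma~\ref{lem:edge-count-branching} are of this type). Conditioned on $u,v$ surviving, the number of edges of such a bundle that can be kept after repair is $1+\#\{i:W_i\cap S\ne\emptyset\}$, where $S$ is the sampled vertex set, so its expected contribution to $e(G_p')$ is $p^2\bigl(1+p(\mu-1)\bigr)$ versus $p^2\mu$ before repair; for $\Theta(n)$ bundles of multiplicity $\Theta(n)$ (so $e=\Theta(n^2)$ and $p\asymp n/e\asymp 1/n$) the repair destroys all but a $\Theta(1/n)$ fraction of the sampled edges and the amplification yields nothing. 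For~\ref{enum:CL-separated}, the truncation claim fails on the same kind of example: since witnesses may be shared, high-multiplicity bundles can carry essentially \emph{all} the edges, so with $e=\Theta(n^2)$ and $\mu=\Theta(n)$ truncating to $m=\sqrt{e/n}=\Theta(\sqrt n)$ edges per bundle keeps only $\Theta(n^{1.5})$ of the $\Theta(n^2)$ edges, after which Theorem~\ref{thm:multiplicity-crossing-lemma} gives ${\rm cr}=\Omega(n^2)$ rather than the required $\Omega(n^{3.5})$. Your linear bound ($e\le 3n-6$ for crossing-free separated drawings, hence ${\rm cr}(G)\ge e-3n+6$) and the tightness discussion do match the paper, which however verifies tightness only at the extremal densities $e=\Theta(n^2)$ and $e=\Theta(n^3)$ (the latter via $n-2$ circular arcs between every pair of $n$ points with no four on a circle), sidestepping the witness-realization difficulty you correctly identify.
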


We prove Theorem~\ref{thm:CL-separated-main} in Section~\ref{sec:separated-styles}.
Our arguments hold in a more general setting, which we present in Section~\ref{sec:general-CL}.
In Section~\ref{sec:new-proofs-old-variants} we use this general setting to deduce other known Crossing Lemma variants, including Theorem~\ref{thm:multiplicity-crossing-lemma}.
We conclude the paper with some open questions in Section~\ref{sec:conclusions}.

\section{A Generalized Crossing Lemma}
\label{sec:general-CL}

In this section we consider general drawing styles and propose a generalized Crossing Lemma, which will subsume the Crossing Lemma variants in Theorem~\ref{thm:CL-separated-main} and Section~\ref{sec:new-proofs-old-variants}.
A \emph{drawing style} $D$ is a predicate over the collection of all topological drawings, i.e., for each topological drawing of a multigraph $G$ we specify whether $G$ is in drawing style $D$ or not.
We say that $G$ is a multigraph in drawing style $D$ when $G$ is a topological multigraph whose drawing is in drawing style $D$.

In order to prove our generalized Crossing Lemma, we follow the line of arguments of Pach and T\'oth~\cite{PT20} for branching multigraphs.
Their main tool is a bisection theorem for branching drawings, which easily generalizes to all separated drawings. 
We generalize their definition as follows.

\begin{definition}[$D$-bisection width]
 For a drawing style $D$ the {\em $D$-bisection width} ${\rm b}_D(G)$ of a multigraph $G$ in drawing style $D$ is the smallest number of edges whose removal splits $G$ into two multigraphs, $G_1$ and $G_2$, in drawing style $D$ with no edge connecting them such that $|V(G_1)|, |V(G_2)|\ge n/5$.
\end{definition} 

We say that a drawing style is \emph{monotone} if removing edges retains the drawing style, that is, for every multigraph $G$ in drawing style $D$ and any edge removal, the resulting multigraph with its inherited drawing from $G$ is again in drawing style $D$.
Note that we require a monotone drawing style to be retained only after removing edges, but not necessarily after removing vertices.
For example, the branching drawing style is in general not maintained after removing a vertex, since a closed curve formed by a pair of parallel edges might become empty.
However, the separated, single-crossing and locally starlike drawings styles (and therefore also the branching drawing style) are monotone.

Given a topological multigraph $G$, we call any operation of the following form a \emph{vertex split}: (1) Replace a vertex $v$ of $G$ by two vertices $v_1$ and $v_2$ and (2) by locally modifying the edges in a small neighborhood of $v$, connect each edge in $G$ incident to $v$ to either $v_1$ or $v_2$ in such a way that no new crossing is created.
Note that such a split is possible, even enforcing the degree of $v_1$ to be any specific number between $0$ and the degree of $v$.
We say that a drawing style is \emph{split-compatible} if performing vertex splits retains the drawing style, that is, for every multigraph $G$ in drawing style $D$ and any vertex split, the resulting multigraph with its inherited drawing from $G$ is again in drawing style $D$.
Again, the separated, single-crossing and locally starlike drawings styles (and therefore also the branching drawing style) are split-compatible.

We are now ready to state our main result.
Recall that $\Delta(G)$ denotes the maximum degree of a vertex in $G$.

\begin{theorem}[Generalized Crossing Lemma]\label{thm:general-drawing-style}
 Suppose $D$ is a monotone and split-compatible drawing style, and that there are constants $k_1,k_2,k_3 > 0$ and $b > 1$ such that each of the following holds for every $n$-vertex $e$-edge multigraph $G$ in drawing style $D$:
 \begin{enumerate}[label = \textbf{(P\arabic*)}, leftmargin = 3em]
  \item If ${\rm cr}(G) = 0$, then the edge count satisfies $e \leq k_1\cdot n$. \label{enum:planar-edge-count}
  
  \item The $D$-bisection width satisfies $b_D(G) \leq k_2 \sqrt{{\rm cr}(G) + \Delta(G)\cdot e + n}$. \label{enum:bisection-width}
  
  \item The edge count satisfies $e \leq k_3 n^b$. \label{enum:general-edge-count}
 \end{enumerate}
 Then there exists an absolute constant $\alpha > 0$ such that for any $n$-vertex $e$-edge multigraph $G$ in drawing style $D$ we have
 \[
  {\rm cr}(G) \geq \alpha \frac{e^{x(b)+2}}{n^{x(b)+1}}, \qquad \text{provided } e > (k_1+1)n,
 \]
 where $x(b) := 1/(b-1)$ and $\alpha$ 
 is some positive constant depending only on $b,k_2$, and $k_3$.
\end{theorem}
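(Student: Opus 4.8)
The plan is to adapt the bisection argument of Pach and T\'oth, proving the bound essentially in one step by a recursive decomposition whose cut-off is tuned using the polynomial edge bound \textbf{(P3)}.

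First I would remove high degrees. Since $D$ is split-compatible, repeatedly splitting every vertex whose degree exceeds $\lceil 2e/n\rceil$ into vertices of degree at most that value produces a multigraph in drawing style $D$ with the same crossing number and edge count, at most $2n$ vertices, and maximum degree $O(e/n)$; so I may assume $\Delta(G)\le C_0e/n$ for an absolute constant $C_0$, at the cost of only a constant factor in the final bound. Next, fix a constant $c=c(b,k_1,k_2,k_3)$, to be chosen large at the end. If $(k_1+1)n<e\le c\,n$, then deleting one edge from each crossing and applying \textbf{(P1)} gives $\mathrm{cr}(G)\ge e-k_1n>n$, whereas $\alpha\,e^{x(b)+2}/n^{x(b)+1}\le \alpha\,c^{\,x(b)+2}\,n<n$ as soon as $\alpha\le c^{-x(b)-2}$. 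So from now on I assume $e>c\,n$.

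Now set $n_0:=\big\lfloor (e/(2k_3n))^{1/(b-1)}\big\rfloor$. By \textbf{(P3)} one has $e\le k_3n^b$, which makes $1\le n_0<n$, and by construction $k_3\,n_0^{\,b-1}\,n\le e/2$; moreover, for $c\ge 2k_3$, $n_0$ is of order $(e/(k_3n))^{1/(b-1)}$. Repeatedly apply the $D$-bisection to every current piece having more than $n_0$ vertices; since $D$ is monotone, every piece stays in drawing style $D$, and one ends up with pieces $G_1,\dots,G_m$, each on at most $n_0$ vertices, obtained by deleting a total of $R$ edges. On one side, \textbf{(P3)} applied to each piece gives $\sum_\ell e(G_\ell)\le k_3\,n_0^{\,b-1}\sum_\ell|V(G_\ell)|=k_3\,n_0^{\,b-1}n\le e/2$, hence $R\ge e/2$. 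On the other side, summing the bisection widths level by level, invoking \textbf{(P2)}, the estimate $\Delta\le C_0e/n$, the Cauchy--Schwarz (power-mean) inequality, the bounds $\sum_{\text{level }d}\mathrm{cr}(\cdot)\le\mathrm{cr}(G)$ and $\sum_{\text{level }d}e(\cdot)\le e$, and the facts that level $d$ contains at most $\min(2^d,n/n_0)$ pieces, each on at most $(4/5)^dn$ vertices, I would arrive at $R\le c_b\,k_2\sqrt{n/n_0}\cdot\sqrt{\mathrm{cr}(G)+(e/n)e+n}$, with $c_b$ depending only on $b$. Comparing the two bounds for $R$, squaring, and solving for $\mathrm{cr}(G)$ yields
\[
 \mathrm{cr}(G)\ \ge\ \frac{n_0}{4c_b^2k_2^2\,n}\;e^2\ -\ \frac{e^2}{n}\ -\ n .
\]
Substituting $n_0$ of order $(e/(k_3n))^{1/(b-1)}$, the first term is of order $k_2^{-2}k_3^{-x(b)}\,e^{x(b)+2}/n^{x(b)+1}$ with $x(b)=1/(b-1)$; taking the constant $c$ of the easy case large enough (in $b,k_2,k_3$) makes the two subtracted terms together at most half of it, and undoing the degree reduction costs one further constant factor. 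This gives the asserted bound with $\alpha=\alpha_bk_2^{-2}k_3^{-x(b)}$.

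I expect the main difficulty to lie in the estimate for $R$ in the recursive step: the bisection widths over the whole decomposition tree must be summed so that the final bound is exactly of order $k_2\sqrt{n/n_0}\cdot\sqrt{\mathrm{cr}(G)+\Delta e+n}$, with no extraneous logarithmic factor in $n/n_0$ (such a factor would only yield a weaker hypothesis, of the form $e$ at least a constant times $n\log n$), and the power-mean step has to be carried out separately according to whether $b\ge 2$ or $b<2$. A secondary, routine point is to check that the constants accumulated in the degree reduction, in the choice of $n_0$, and in the final inequality combine into the claimed shape $\alpha=\alpha_bk_2^{-2}k_3^{-x(b)}$.
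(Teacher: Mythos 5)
Your strategy is essentially the paper's: split high-degree vertices (using split-compatibility), dispose of the range $e=O(n)$ via \textbf{(P1)}, recursively bisect down to pieces of about $n_0=(e/(2k_3n))^{1/(b-1)}$ vertices, get $R\ge e/2$ from \textbf{(P3)} applied to the final pieces, and bound $R$ from above via \textbf{(P2)} plus Cauchy--Schwarz. There is, however, a genuine gap at exactly the step you yourself single out as the main difficulty, and the facts you list do not close it. If at every level you bisect \emph{every} current piece with more than $n_0$ vertices, the recursion can run for $D\approx\log_{5/4}(n/n_0)\approx 3.1\log_2(n/n_0)$ levels (a bisection only guarantees that each child keeps at most a $4/5$ fraction of the vertices), and your per-level count $\min(2^d,n/n_0)$ is stuck at $n/n_0$ on the last $\approx 2.1\log_2(n/n_0)$ of these levels. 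Hence
\[
 \sum_{d=0}^{D}\sqrt{\min(2^d,\,n/n_0)}\;=\;\Theta\!\left(\sqrt{n/n_0}\,\log(n/n_0)\right),
\]
and the level-by-level Cauchy--Schwarz argument yields only $R\le O\!\left(k_2\sqrt{n/n_0}\,\log(n/n_0)\,\sqrt{\mathrm{cr}(G)+\Delta(G)e+n}\right)$. Comparing this with $R\ge e/2$ loses a factor $\log^2(n/n_0)$ in the final crossing-number bound, which cannot be absorbed into the constant $\alpha_b$. (A single global Cauchy--Schwarz over all bisections in the tree fares no better, because $\sum_H\mathrm{cr}(H)$ over all bisected pieces is bounded by $\mathrm{cr}(G)$ only \emph{per level}.)

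The paper removes the logarithm by changing the bisection rule rather than the estimate: at step $i+1$ one bisects only those pieces having at least $(4/5)^{i+1}n$ vertices (smaller pieces wait for a later step at which the threshold has caught up with them). Then after step $i+1$ every piece has at most $(4/5)^{i+1}n$ vertices, and by vertex-disjointness at most $(5/4)^{i+1}$ pieces are bisected at step $i+1$; the per-level counts grow geometrically, so $\sum_{i\le k}(5/4)^{(i+1)/2}=O\bigl((5/4)^{k/2}\bigr)=O(\sqrt{n/n_0})$ is dominated by its last term and no logarithm appears. With this modification the rest of your outline goes through; in particular, no case distinction between $b\ge2$ and $b<2$ is needed in the Cauchy--Schwarz step. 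A further bookkeeping caveat: your easy-case threshold $c$ must end up equal to $\alpha^{-1/(x(b)+2)}$ for the final $\alpha$ (as in the paper), since choosing $c$ as a separate, larger function of $k_2,k_3$ forces $\alpha\le c^{-x(b)-2}$ and can destroy the claimed shape $\alpha=\alpha_b k_2^{-2}k_3^{-x(b)}$.
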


\begin{lemma}\label{lem:best-possible}
 If there exist for arbitrarily large $n$ multigraphs in drawing style $D$ with $n$ vertices and $e = \Theta(n^b)$ edges such that any two edges cross at most a constant number of times, then the bound in Theorem~\ref{thm:general-drawing-style} is asymptotically tight.
\end{lemma}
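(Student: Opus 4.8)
The plan is to reverse‑engineer a matching construction from the only object the hypothesis supplies: for all $m$ in some infinite set $S$ there is an $m$-vertex multigraph $\Gamma_m$ in drawing style $D$ with $e_m = \Theta(m^b)$ edges, any two of which cross at most a fixed number $c$ of times. The sole estimate we need about $\Gamma_m$ is the trivial one, ${\rm cr}(\Gamma_m) \le c\binom{e_m}{2} < c\,e_m^2$. Since $b>1$, we have $e_m = \Theta(m^b) > (k_1+1)m$ for all large enough $m\in S$, so Theorem~\ref{thm:general-drawing-style} applies and yields ${\rm cr}(\Gamma_m)\ge \alpha\, e_m^{x(b)+2}/m^{x(b)+1}$. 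Now the whole computation rests on the single identity $x(b)(b-1)=1$, equivalently $b\,x(b)=x(b)+1$: substituting $e_m=\Theta(m^b)$ into the lower bound gives $e_m^{x(b)+2}/m^{x(b)+1} = \Theta\big(m^{\,b(x(b)+2)-(x(b)+1)}\big) = \Theta(m^{2b})$ because $b\,x(b)-x(b)=1$, while the trivial bound gives ${\rm cr}(\Gamma_m)=O(e_m^2)=O(m^{2b})$. Hence ${\rm cr}(\Gamma_m)=\Theta(m^{2b})=\Theta\big(e_m^{x(b)+2}/m^{x(b)+1}\big)$ along $m\in S$, $m\to\infty$, which already shows that the exponents of $e$ and $n$ in Theorem~\ref{thm:general-drawing-style} cannot be improved.

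To obtain a construction matching the bound for an essentially arbitrary target pair $(n,e)$ with $(k_1+1)n < e = O(n^b)$, rather than just along the density line $e=\Theta(n^b)$, I would use many disjoint copies of a rescaled block. Choose $m\in S$ with $m=\Theta\big((e/n)^{x(b)}\big)$ — feasible because $e_m/m=\Theta(m^{b-1})$ and $e/n$ ranges between the constant $k_1+1$ and $O(n^{b-1})$ — take $t:=\lfloor n/m\rfloor$ copies of $\Gamma_m$ drawn in pairwise disjoint disks, append at most $m-1$ isolated vertices to reach exactly $n$ vertices, and finally delete edges down to exactly $e$, which is legitimate by monotonicity of $D$ and discards only $t\,e_m-e=O(e)$ edges since $t\,e_m=\Theta(e)$. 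No crossing of the resulting multigraph $G$ joins two disks, so
\[
 {\rm cr}(G) \le t\cdot {\rm cr}(\Gamma_m) \le c\,t\,e_m^2 = c\,(t\,e_m)\,e_m = O(e)\cdot\Theta(m^b) = O(e)\cdot\Theta\big((e/n)^{x(b)+1}\big) = O\!\left(\frac{e^{x(b)+2}}{n^{x(b)+1}}\right),
\]
matching the lower bound of Theorem~\ref{thm:general-drawing-style} up to the constant.

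The step that is not pure bookkeeping — and the point I expect to need the most care — is legitimizing the disjoint‑union (and isolated‑vertex) construction: for an \emph{abstract} drawing style $D$, being monotone and split‑compatible does not formally imply that a union of $D$-drawings placed in disjoint disks is again a $D$-drawing, and there is the minor extra subtlety that $m$ must be picked from the (possibly sparse) set $S$, which limits how finely the density can be tuned in the second construction. For every concrete style occurring in this paper — separated, branching, bounded edge‑multiplicity, locally starlike, single‑crossing and their combinations — this is immediate, since drawing the components in tiny pairwise disjoint disks creates no new crossing, no empty lens and no new pair of parallel edges, and there $S=\mathbb{N}$; so the second construction applies and gives tightness for the full range of densities. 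For the abstract lemma as stated, however, the first paragraph alone suffices, as it relies on nothing beyond Theorem~\ref{thm:general-drawing-style} and the trivial crossing bound for $\Gamma_m$.
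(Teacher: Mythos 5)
Your first paragraph is exactly the paper's proof: compare the trivial upper bound ${\rm cr}=O(e^2)=O(n^{2b})$ with the theorem's lower bound, which equals $\Omega(n^{2b})$ by the identity $b\,x(b)=x(b)+1$. The additional disjoint-copies construction for general densities goes beyond what the lemma claims (and you correctly flag its one genuine subtlety for abstract styles), but the core argument is correct and matches the paper.
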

\begin{proof}
 Consider such an $n$-vertex $e$-edge multigraph in drawing style $D$.
 Clearly, there are at most $O(e^2) = O(n^{2b})$ crossings, while Theorem~\ref{thm:general-drawing-style} gives with $x(b)=1/(b-1)$ that there are at least
 \[
  \Omega \left( \frac{e^{x(b)+2}}{n^{x(b)+1}} \right)
  = \Omega \left( \frac{e^{x(b)+2}}{n^{b\cdot x(b)}} \right)
  = \Omega \left( \frac{n^{b\cdot x(b)+2b}}{n^{b\cdot x(b)}} \right)
  = \Omega \left( n^{2b} \right)
 \]
 crossings.
%
%
\end{proof}

\subsection{Proof of Theorem~\ref{thm:general-drawing-style}}

\begin{proofidea}
 Before proving Theorem~\ref{thm:general-drawing-style}, let us sketch the rough idea.
 Suppose, for a contradiction, that $G$ is a multigraph in drawing style $D$ with fewer than $\alpha\frac{e^{x(b)+2}}{n^{x(b)+1}}$ crossings, for a constant $\alpha$ to be defined.
 First, we conclude from~\ref{enum:planar-edge-count} that $G$ must have many edges.
 Then, by~\ref{enum:bisection-width}, the $D$-bisection width of $G$ is small, and thus we can remove few edges from the drawing to obtain two smaller multigraphs, $G_1$ and $G_2$, both also in drawing style $D$, which we call parts.
 We then repeat splitting each large enough part into two parts each, again using~\ref{enum:bisection-width}.
 Note that each part has at most $4/5$ of the vertices of the corresponding part in the previous step.
 We continue until all parts are smaller than a carefully chosen threshold.
 As we removed relatively few edges during this decomposition algorithm, the final parts still have a lot of edges, while having few vertices each.
 This will contradict~\ref{enum:general-edge-count} and hence complete the proof.
\end{proofidea}
 
\noindent
Now, let us start with the proof of Theorem~\ref{thm:general-drawing-style}.
We define an absolute constant
\begin{equation*}
 \alpha := \min\left\{ \frac{1}{2^{2x(b)+16}} \cdot \frac{1}{k_2^2} \cdot \frac{1}{k_3^{x(b)}} \; ; \; \frac{1}{2^{(2x(b)+16)\cdot \frac{x(b)+2}{x(b)}}} \cdot \frac{1}{k_2^{2 \cdot \frac{x(b)+2}{x(b)}}} \cdot \frac{1}{k_3^{x(b)+2}} \right\}.
\end{equation*}
Then a simple computation shows that
\begin{align}
 \sqrt{\alpha} \cdot k_2 \cdot \sqrt{k_3^{x(b)}} \cdot 2^{x(b)+6} &\leq \frac{1}{4} \text{ and }\label{eq:alpha-bound-1}\\
 \sqrt{\alpha^{\frac{x(b)}{x(b)+2}}} \cdot k_2 \cdot \sqrt{k_3^{x(b)}} \cdot 2^{x(b)+6} &\leq \frac{1}{4},\label{eq:alpha-bound-2}
\end{align}
which will be important later.

Now let $\tilde{G}$ be a fixed multigraph in drawing style $D$ with $\tilde{n}$ vertices and $\tilde{e} > (k_1+1)\tilde{n}$ edges.
Let $G'$ be an edge-maximal subgraph of $\tilde{G}$ on vertex set $V(\tilde{G})$ such that the inherited drawing of $G'$ has no crossings.
Since $D$ is monotone, $G'$ is in drawing style $D$.
Hence, by~\ref{enum:planar-edge-count}, for the number $e'$ of edges in $G'$ we have $e' \leq k_1\cdot n' = k_1 \cdot \tilde{n}$.
Since $G'$ is edge-maximal crossing-free, each edge in $E(\tilde{G}) - E(G')$ has at least one crossing with an edge in $E(G')$.
Thus
\begin{equation}\label{eq:linear-crossing-number}
 {\rm cr}(\tilde{G}) \geq \tilde{e}-e' \geq \tilde{e} - k_1\tilde{n} > \tilde{n}.
\end{equation}
In case $(k_1+1)\tilde{n} < \tilde{e} \leq \beta\tilde{n}$ for $\beta := \alpha^{-1/(x(b)+2)}$, we get
\[
 {\rm cr}(\tilde{G}) \overset{\eqref{eq:linear-crossing-number}}{>} \tilde{n}
 \geq \alpha \cdot \frac{\tilde{e}^{x(b)+2}}{\tilde{n}^{x(b)+1}},
\]
as desired.
To prove Theorem~\ref{thm:general-drawing-style} in the remaining case $\tilde{e} > \beta\tilde{n}$ we use proof by contradiction. 
Therefore assume that the number of crossings in $\tilde{G}$ satisfies
\[
 {\rm cr}(\tilde{G}) < \alpha \cdot \frac{ \tilde{e}^{x(b)+2}}{\tilde{n}^{x(b)+1}}.
\]
Let $d$ denote the average degree of the vertices of $\tilde{G}$, that is, $d=2\tilde{e}/\tilde{n}$.
For every vertex $v\in V(\tilde{G})$ whose degree, $\deg(v,\tilde{G})$, is larger than $d$, we perform $\lceil \deg(v,\tilde{G})/d \rceil - 1$ vertex splits so as to split $v$ into $\lceil \deg(v,\tilde{G})/d \rceil$ vertices, each of degree at most~$d$.
At the end of the procedure, we obtain a multigraph $G$ with $e=\tilde{e}$ edges, $n<2\tilde{n}$ vertices, and maximum degree $\Delta(G) \leq d = 2\tilde{e}/\tilde{n} < 4e/n$.
Moreover, as $D$ is split-compatible, $G$ is in drawing style $D$.
For the number of crossings in $G$, we have
\begin{equation}\label{start}
 {\rm cr}(G) ={\rm cr}(\tilde{G}) < \alpha \cdot \frac{ \tilde{e}^{x(b)+2}}{\tilde{n}^{x(b)+1}} < 2^{x(b)+1} \alpha \cdot \frac{e^{x(b)+2}}{n^{x(b)+1}}.
\end{equation}
Moreover, recall that
\begin{equation}\label{eq:delta}
 e > \beta\tilde{n} > \beta \frac{n}{2} \qquad \text{for } \beta = \frac{1}{\alpha^{1/(x(b)+2)}}.
\end{equation}
We break $G$ into smaller parts, according to the following procedure.
At each step the parts form a partition of the entire vertex set $V(G)$.

\begin{center}
\begin{minipage}{0.8\textwidth}
 \noindent {\sc Decomposition Algorithm}

 \medskip

 \noindent {\sc Step 0.}\\
 $\triangleright$ {\bf Let} $G^0=G, G^0_1=G, M_0=1, m_0=1.$

 \medskip

 Suppose that we have already executed {\sc Step} $i$, and that the resulting graph $G^{i}$ consists of $M_i$ parts, $G_1^{i},G_2^{i},\ldots,G_{M_{i}}^{i}$, each in drawing style $D$ and having at most~$(4/5)^i n$ vertices.
 Assume without loss of generality that each of the first $m_i$ parts of $G^i$ has at least $(4/5)^{i+1}n$ vertices and the remaining $M_i-m_i$ have fewer.
 Letting $n(G_j^i)$ denote the number of vertices of the part $G_j^i$, we have
 \begin{equation*}
  (4/5)^{i+1} n(G) \le n(G_j^{i}) \le (4/5)^{i} n(G), \qquad 1\le j\le {m_{i}}.
 \end{equation*}
 Hence,
 \begin{equation}\label{darab}
  m_i \le (5/4)^{i+1}.
 \end{equation}

 \smallskip

 \noindent {\sc Step $i+1$.}\\
 $\triangleright$ {\bf If}
 \begin{equation*}
  (4/5)^{i} < \frac{1}{(2k_3)^{x(b)}} \cdot \frac{e^{x(b)}}{n^{x(b)+1}},
 \end{equation*}
 {\bf then} {\sc stop}.\\
 $\triangleright$ {\bf Else}, for $j=1,2,\ldots,{m_{i}}$, delete ${\rm b}_{\rm D}(G_j^{i})$ edges from $G_j^{i}$, as guaranteed by~\ref{enum:bisection-width}, such that $G_j^{i}$ falls into two parts, each of which is in drawing style $D$ and contains at most $(4/5)n(G_j^{i})$ vertices.
 Let $G^{i+1}$ denote the resulting graph on the original set of $n$ vertices.
 
 \medskip
 
 Clearly, each part of $G^{i+1}$ has at most $(4/5)^{i+1}n$ vertices.
\end{minipage}
\end{center}

\noindent
Suppose that the {\sc Decomposition Algorithm} terminates in {\sc Step} $k+1$.
If $k>0$, then
\begin{equation}\label{kettes}
 (4/5)^k < \frac{1}{(2k_3)^{x(b)}} \cdot \frac{e^{x(b)}}{n^{x(b)+1}} \le (4/5)^{k-1}.
\end{equation}

First, we give an upper bound on the total number of edges deleted from $G$.
Using Cauchy-Schwarz inequality, we get for any nonnegative numbers $a_1,\ldots,a_m$,
\begin{equation}\label{harmas}
 \sum_{j=1}^m\sqrt{a_j}\le \sqrt{m\sum_{j=1}^m a_j},
\end{equation}
and thus obtain that, for any $0 \le i \le k$,
\begin{multline}\label{eq:cr-upper-bound}
 \sum_{j=1}^{m_i} \sqrt{{\rm cr}(G_j^{i})}
 \overset{\eqref{harmas}}{\le} \sqrt{m_i \sum_{j=1}^{m_i} {\rm cr}(G_j^i)}
 \overset{\eqref{darab}}{\le} \sqrt{(5/4)^{i+1}} \sqrt{{\rm cr}(G)}\\
 \overset{\eqref{start}}{<} \sqrt{(5/4)^{i+1}} \sqrt{2^{x(b)+1} \alpha \cdot \frac{e^{x(b)+2}}{n^{x(b)+1}}}.
\end{multline}
Letting $e(G_j^i)$ and $\Delta(G^i_j)$ denote the number of edges and maximum degree in part $G^i_j$, respectively, we obtain similarly
\begin{multline}\label{eq:deg-2-upper-bound}
 \sum_{j=1}^{m_i} \sqrt{ \Delta(G^i_j) \cdot e(G_j^i) + n(G^i_j)}
 \overset{\eqref{harmas}}{\le} \sqrt{ m_i \left( \sum_{j=1}^{m_i} \Delta(G^i_j) \cdot e(G_j^i) + n(G_j^i) \right)} \\
 \overset{\eqref{darab}}{\le} \sqrt{(5/4)^{i+1}} \sqrt{ \Delta(G) \cdot e + n} 
 \le \sqrt{(5/4)^{i+1}} \sqrt{ \frac{4e}{n}e + n} \\
 < \sqrt{(5/4)^{i+1}} \sqrt{\frac{4e^2}{n}+\frac{4e^2}{n}}
 < \sqrt{(5/4)^{i+1}} \frac{3e}{\sqrt{n}},
\end{multline}
where we used in the last line the fact that $n/2 < e$.

Using a partial sum of a geometric series we get
\begin{equation}\label{eq:geometric-series}
 \sum_{i=0}^{k} (\sqrt{5/4})^{i+1} 
 = \frac{(\sqrt{5/4})^{k+2} - 1}{\sqrt{5/4}-1} - 1
 < \frac{(\sqrt{5/4})^3}{\sqrt{5/4}-1} \cdot (\sqrt{5/4})^{k-1}
 < 12 \cdot (\sqrt{5/4})^{k-1}
\end{equation}
Thus, as each $G_j^i$ is in drawing style $D$ and hence~\ref{enum:bisection-width} holds for each $G_j^i$, the total number of edges deleted during the decomposition procedure is
\begin{align}
 \sum_{i=0}^{k} \sum_{j=1}^{m_{i}} {\rm b}_D(G_j^{i})
 \le{}& k_2 \sum_{i=0}^{k} \sum_{j=1}^{m_{i}} \sqrt{{\rm cr}(G_j^{i}) + \Delta(G_j^i) \cdot e(G_j^i) + n(G_j^i)} \nonumber \\
 \le{}& k_2 \left( \sum_{i=0}^{k} \sum_{j=1}^{m_{i}} \sqrt{{\rm cr}(G_j^{i})} + \sum_{i=0}^{k} \sum_{j=1}^{m_{i}} \sqrt{ \Delta(G_j^i) \cdot e(G_j^i) + n(G_j^i)} \right) \nonumber \\
 \overset{\eqref{eq:cr-upper-bound},\eqref{eq:deg-2-upper-bound}}{\le}{}& k_2 \left( \sum_{i=0}^{k} \sqrt{(5/4)^{i+1}} \right) \left( \sqrt{2^{x(b)+1}\alpha \cdot \frac{e^{x(b)+2}}{n^{x(b)+1}}} + \frac{3e}{\sqrt{n}} \right) \nonumber \\
 \overset{\eqref{eq:geometric-series}}{<}{}& k_2 \cdot 12 \sqrt{(5/4)^{k-1}} \left( \sqrt{2^{x(b)+1}\alpha \cdot \frac{e^{x(b)+2}}{n^{x(b)+1}}} + \frac{3e}{\sqrt{n}} \right) \nonumber \\
 \overset{\eqref{kettes}}{<}{}& k_2 \cdot 12 \sqrt{ (2k_3)^{x(b)} \cdot \frac{n^{x(b)+1}}{e^{x(b)}} } \left( \sqrt{2^{x(b)+1}\alpha \cdot \frac{e^{x(b)+2}}{n^{x(b)+1}}} + \frac{3e}{\sqrt{n}} \right) \nonumber \\
 <{}& k_2 \cdot 36 \cdot \sqrt{k_3^{x(b)}} \left( 2^{x(b)} \sqrt{\alpha} e + \sqrt{ \frac{2^{x(b)}n^{x(b)}}{e^{x(b)-2}} } \right) \nonumber \\
 \overset{\eqref{eq:delta}}{<}{}& k_2 \cdot 36 \cdot \sqrt{k_3^{x(b)}} \cdot 2^{x(b)} \left( \sqrt{\alpha} + \sqrt{ \frac{1}{\beta^{x(b)}} } \right) e \nonumber \\
 \overset{\eqref{eq:delta}}{=}{}& k_2 \cdot 36 \cdot \sqrt{k_3^{x(b)}} \cdot 2^{x(b)} \left( \sqrt{\alpha} + \sqrt{ \alpha^{\frac{x(b)}{x(b)+2}} } \right) e \nonumber \\
 <{}& k_2 \cdot \sqrt{k_3^{x(b)}} \cdot 2^{x(b)+6} \left( \sqrt{\alpha} + \sqrt{ \alpha^{\frac{x(b)}{x(b)+2}} } \right) e \overset{\eqref{eq:alpha-bound-1},\eqref{eq:alpha-bound-2}}{\leq} \frac{e}{2}. \label{eq:removed-edges-upper-bound}
\end{align}
By~\eqref{eq:removed-edges-upper-bound} the {\sc Decomposition Algorithm} removes less than half of the edges of $G$ if $k > 0$.
Hence, the number of edges of the graph $G^k$ obtained in the final step of this procedure satisfies
\begin{equation}\label{vegso}
 e(G^k) > \frac{e}{2}.
\end{equation}
(Note that this inequality trivially holds if the algorithm terminates in the very first step, i.e., when $k=0$.)

\smallskip

Next we shall give an upper bound on $e(G^k)$ that contradicts~\eqref{vegso}.
The number of vertices of each part $G_j^k$ of $G^k$ satisfies
\[
 n(G_j^{k}) \le (4/5)^k n
 \overset{\eqref{kettes}}{<} \left( \frac{1}{(2k_3)^{x(b)}} \cdot \frac{e^{x(b)}}{n^{x(b)+1}} \right) n
 = \left( \frac{e}{2 \cdot k_3 \cdot n} \right)^{x(b)},\quad 1 \le j \le M_k.
\]
Hence
\[
 n(G_j^k)^{b-1}
 < \left( \frac{e}{2 \cdot k_3 \cdot n} \right)^{x(b)(b-1)} 
 = \frac{e}{2 \cdot k_3 \cdot n},
\]
since $x(b) = 1/(b-1)$ and hence $x(b)(b-1) = 1$.

As $G_j^k$ is in drawing style $D$,~\ref{enum:general-edge-count} holds for $G_j^k$ and we have
\[
 e(G_j^{k})
 \le k_3 \cdot n(G_j^{k})^{b} 
 < k_3 \cdot n(G_j^{k}) \cdot \frac{e}{2 \cdot k_3 \cdot n}
 = n(G_j^{k}) \cdot \frac{e}{2n}.
\]
Therefore, for the total number of edges of $G^k$ we have
\[
 e(G^k) = \sum_{j=1}^{M_k}e(G_j^{k}) < \frac{e}{2n}\sum_{j=1}^{M_k}n(G_j^{k}) = \frac{e}{2},
\]
contradicting~\eqref{vegso}.
This completes the proof of Theorem~\ref{thm:general-drawing-style}.\qed

\section{Separated Multigraphs}
\label{sec:separated-styles}

We derive our Crossing Lemma variants for separated multigraphs (Theorem~\ref{thm:CL-separated-main}) from the generalized Crossing Lemma (Theorem~\ref{thm:general-drawing-style}) presented in Section~\ref{sec:general-CL}.
Let us denote the separated drawing style by $\Dsep$
 and the separated and locally starlike drawing style by $\Dlocstar$.
In order to apply Theorem~\ref{thm:general-drawing-style}, we shall find for $D = \Dsep, \Dlocstar$
{\bfseries (1)} the largest number of edges in a crossing-free $n$-vertex multigraph in drawing style $D$,
{\bfseries (2)} an upper bound on the $D$-bisection width of multigraphs in drawing style $D$, and
{\bfseries (3)} an upper bound on the number of edges in any $n$-vertex multigraph in drawing style $D$.

As for crossing-free multigraphs $\Dsep$ and $\Dlocstar$ are equivalent to the branching drawing style, we can rely on the following Lemma of Pach and T\'oth.

\begin{lemma}[Pach and T\'oth~\cite{PT20}]\label{lem:crossing-free-branching}
 Any $n$-vertex crossing-free branching multigraph, $n \geq 3$, has at most $3n-6$ edges.
\end{lemma}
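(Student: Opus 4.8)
The plan is to derive the bound from Euler's formula, using the \emph{separated} condition (no empty lens) to rule out faces of length two; since the drawing is crossing-free, the single-crossing and locally starlike requirements are vacuous, so this is the only structural hypothesis we need.

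First I would reduce to the case that $G$ is connected. If $G$ is disconnected, apply the target bound to each component with at least three vertices, and observe that a component with at most two vertices has at most one edge: two parallel edges joining the same two vertices would form a simple closed curve having no vertex strictly on one of its two sides, contradicting separatedness. A short arithmetic check then shows that summing these per-component bounds gives $e \le 3n-6$ as soon as $n \ge 3$. (Alternatively, one can add edges between distinct components; such an edge is not parallel to any existing edge and can be routed without crossings, so both separatedness and crossing-freeness are preserved, reducing to the connected case.)

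Now assume $G$ is connected with $n \ge 3$. Its drawing is a connected plane multigraph with, say, $f$ faces, so Euler's formula gives $n - e + f = 2$, and summing the lengths of all facial walks counts every edge exactly twice, i.e.\ $2e = \sum_F \ell(F)$. The crux is to show $\ell(F) \ge 3$ for every face $F$. A facial walk of length $1$ would require a loop, which $G$ does not have. A facial walk of length $2$ is either of the form $(x,g,y,g)$ for a single edge $g$ traversed twice --- impossible, since this forces $x$ and $y$ to have degree $1$ and hence the whole component to be just $g$, whereas $G$ is connected on $n \ge 3$ vertices --- or of the form $(x,g_1,y,g_2)$ with $g_1 \ne g_2$ parallel edges between $x$ and $y$. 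In the latter case $g_1 \cup g_2$ is a simple closed curve, and because $g_1$ and $g_2$ cross nothing, one of its two sides is exactly the face $F$; but $F$, being a face, contains no vertex of $G$, so one side of $g_1 \cup g_2$ has no vertex, contradicting the separated condition. Hence every face has length at least $3$, so $2e \ge 3f$, and combining with Euler's formula, $2 = n - e + f \le n - e + \tfrac{2e}{3}$, which rearranges to $e \le 3n-6$.

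The only genuinely nontrivial point is the length-two face analysis: recognizing that an empty bigon in the drawing is precisely what separatedness forbids, and verifying that no length-two face can arise for the degenerate reason of a single edge traversed twice once $G$ is taken to be connected with at least three vertices. Everything else is the standard derivation of the planar edge bound, and I expect no further obstacles.
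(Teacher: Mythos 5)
The paper does not reprove this lemma---it is quoted from Pach and T\'oth~\cite{PT18}---but their argument is exactly the Euler-formula count you give: in a crossing-free separated drawing every face has length at least $3$ because loops are forbidden and an empty bigonal face is precisely an empty lens. Your connected case is correct and complete: the two possible shapes of a length-$2$ facial walk are handled properly, and in the parallel-edge case your identification of the face with one full side of the closed curve $g_1\cup g_2$ is sound.

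The one genuine flaw is in your primary reduction to the connected case. The separated condition is a property of the whole drawing, not of its components: a component consisting of two vertices $x,y$ joined by several parallel edges is perfectly admissible, provided each lens contains a vertex of some \emph{other} component. So the claim that a two-vertex component has at most one edge is false, and more generally you cannot ``apply the target bound to each component,'' since a component viewed in isolation need not be separated. Your parenthetical alternative is the correct fix and should be the main argument: if the drawing has $c\ge 2$ components, repeatedly add an edge inside a face whose boundary meets two distinct components; each added edge joins vertices in different components of the current graph, so it creates no loop and no new parallel pair, and it preserves both crossing-freeness and separatedness (adding material never empties an existing lens). This yields a connected separated crossing-free multigraph with $e+c-1$ edges, and the connected bound gives $e\le 3n-6$. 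With that substitution the proof is correct and matches the standard argument.
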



\begin{corollary}\label{cor:crossing-free-separated}
 Any $n$-vertex crossing-free multigraph in drawing style $\Dsep$ or $\Dlocstar$, $n \geq 3$, has at most $3n-6$ edges.
\end{corollary}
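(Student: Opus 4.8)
The plan is to observe that, once a drawing has no crossings at all, the drawing styles $\Dsep$ and $\Dlocstar$ both collapse onto the branching drawing style, so that Lemma~\ref{lem:crossing-free-branching} applies essentially verbatim. Recall that, by definition, a branching multigraph is one that is simultaneously separated, single-crossing, and locally starlike; the content of the corollary is that for crossing-free drawings the last two of these three conditions come for free.

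Concretely, I would first note that every multigraph in drawing style $\Dlocstar$ is in particular in drawing style $\Dsep$, so it suffices to treat a crossing-free multigraph $G$ in drawing style $\Dsep$ with $n \geq 3$ vertices. Then I would argue that, since no two edges of $G$ cross, the single-crossing requirement (edges sharing $k$ endpoints have at most $k+1$ common points, i.e.\ cross at most once) and the locally starlike requirement (no two adjacent edges cross) are both satisfied vacuously. Combining this with the standing hypothesis that $G$ is separated shows that $G$ is branching, and Lemma~\ref{lem:crossing-free-branching} then gives $e(G) \leq 3n-6$, which is exactly the claimed bound.

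There is no genuine obstacle here: the only thing to verify is the (immediate) observation that ``crossing-free'' automatically supplies the two defining conditions of ``branching'' beyond ``separated''. One could add, for completeness, a sentence confirming that in a crossing-free drawing any two parallel edges between $u$ and $v$ indeed form a \emph{simple} closed curve, since by the general-position assumptions two edges sharing an interior point would have to cross there; but this is already subsumed in the definition of ``separated'', so strictly speaking nothing extra needs to be said.
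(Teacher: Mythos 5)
Your argument is exactly the paper's: the corollary is stated there as an immediate consequence of Lemma~\ref{lem:crossing-free-branching}, justified by the one-line observation that for crossing-free drawings the styles $\Dsep$ and $\Dlocstar$ coincide with the branching drawing style, since the single-crossing and locally starlike conditions hold vacuously. Your write-up is correct and just spells out this same reduction in more detail.
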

%

Also we can derive the bounds on the $D$-bisection width from the corresponding bound for the branching drawing style due to Pach and T\'oth.

\begin{lemma}[Pach and T\'oth~\cite{PT20}]\label{lem:branching-bisection}
 For any multigraph $G$ in the branching drawing style $D$ with $n$ vertices of degrees $d_1,d_2,\ldots,d_n$, and with ${\rm cr}(G)$ crossings, the $D$-bisection width of $G$ satisfies
 \[
  {\rm b}_D(G) \le 22\sqrt{ {\rm cr}(G) + \sum_{i=1}^{n} d_i^2 + n }.
 \]
\end{lemma}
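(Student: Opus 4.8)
My plan is to follow the classical route from crossing numbers to bisection width via planar separators, while taking care that both resulting parts stay branching. First I would pass to the \emph{planarization} $H$ of $G$: turn every crossing into a new vertex of degree $4$, so that $H$ is a plane (crossing-free) graph on $n+{\rm cr}(G)$ vertices in which every original vertex $v$ keeps its degree $d_v$. Put weight $1$ on each original vertex and weight $0$ on each crossing vertex. A balanced weighted edge cut of $H$ yields a balanced edge cut of $G$: deleting an edge of $H$ means deleting (a sub-arc of) at most one edge of $G$, so a cut $F\subseteq E(H)$ costs at most $|F|$ edges of $G$; and since the branching drawing style is monotone, deleting edges keeps what remains branching. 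The weights ensure that a split balanced with respect to total weight is balanced with respect to the original vertex set, which is what a $D$-bisection asks for.

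Next I would invoke an edge-separator theorem for plane graphs of the ``$\sqrt{\sum(\text{degree})^2}$'' type (Gazit–Miller / Diks–Djidjev–Sýkora–Vrťo style): every plane graph with degree sequence $\delta_1,\dots,\delta_N$ has a set of at most $c\sqrt{\sum_j\delta_j^2}$ edges whose deletion leaves each component with at most $2/3$ of the total weight. The $\sqrt{\sum\delta_j^2}$ form, rather than $\sqrt N$, is what is needed here, because $H$ carries the original high-degree vertices — a star already refutes a $\sqrt N$-type bound. Applying this to $H$ and recursing on the heavier piece until each piece carries weight at most $3n/5$, a geometric series bounds the number of deleted edges of $H$ by $O(\sqrt{\sum_j\delta_j^2})$, after which the pieces can be greedily two-coloured into two groups of weight in $[n/5,4n/5]$ at no extra cost. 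Since for the planarization $\sum_j\delta_j^2=\sum_{i=1}^n d_i^2+16\,{\rm cr}(G)$, this gives ${\rm b}_D(G)=O(\sqrt{{\rm cr}(G)+\sum_i d_i^2})$; the additive $n$ under the root and the explicit constant $22$ are then only a matter of bookkeeping in the recursion and of the separator constant.

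The step I expect to require genuine care is checking that both parts are really \emph{branching} — concretely, that neither acquires an \emph{empty lens}. Deleting edges creates no new crossings and no crossing between adjacent edges, so the single-crossing and locally-starlike properties are inherited for free; the danger is that two parallel edges both survive in one part while the vertex that $G$'s separatedness placed inside their lens was assigned to the other part. To control this I would exploit that $H$ is plane: the separating curve $\gamma$ (crossing exactly the cut edges and passing through no vertex) cannot cross the boundary of such a lens, since those boundary edges were not cut, so $\gamma$ must have a whole component strictly inside the lens. The interiors of the ``elementary'' lenses formed by consecutive parallel edges between a fixed pair of vertices are pairwise disjoint, hence distinct problematic elementary lenses are charged to distinct components of $\gamma$ and so to $\Omega(1)$ cut edges each; thus the number of problematic elementary lenses is $O(|F|)$. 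The repair is then to delete, in each maximal run of consecutive problematic elementary lenses, all but one of the edges spanning it — this destroys every empty lens among them while any surviving parallel pair spans a good elementary lens — costing $O(|F|)$ extra deletions, so the asymptotic bound (and, with honest bookkeeping, the constant $22$) survives. Making this charging-and-repair argument precise, and threading it through the recursion, is where the real work lies; the separator machinery itself is standard.
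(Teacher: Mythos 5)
The paper does not actually prove this lemma --- it is imported verbatim from Pach and T\'oth~\cite{PT18} --- so I am judging your sketch against the known argument, whose skeleton (planarize, apply a weighted Gazit--Miller-type edge separator with the $\sqrt{\sum\delta_j^2}$ bound, recurse and regroup, using $\sum_j\delta_j^2=\sum_i d_i^2+16\,{\rm cr}(G)$) you have reproduced correctly. That part of your proposal is sound, and you have also correctly isolated the one genuinely delicate point: making both parts \emph{branching}, i.e.\ avoiding empty lenses after the split.

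However, your treatment of that point has a real gap. Your charging scheme assumes that a lens can only become problematic if the separator ``enters'' it, so that each bad elementary lens owns a component of the cutting curve $\gamma$ and hence $\Omega(1)$ cut edges. This fails: a vertex, or an entire connected component of the planarization, that lies strictly inside a lens can be assigned to the opposite part purely for weight-balancing reasons, with no cut edge anywhere near that lens. Concretely, take $n/3$ disjoint gadgets, each consisting of $u_j,v_j$ joined by two parallel edges whose lens contains an isolated vertex $w_j$. Here ${\rm cr}=0$ and $\sum_i d_i^2=\Theta(n)$, so the target bound is $O(\sqrt{n})$; yet a separator procedure that balances weights by sending each $w_j$ to the side opposite $u_j,v_j$ uses zero cut edges and creates $\Theta(n)$ empty lenses, forcing $\Theta(n)$ repair deletions. (The lemma is still true for this graph --- one simply splits along gadget boundaries --- but your argument does not force the partition to do that.) So the repair cost cannot be bounded by $O(|F|)$; you must instead constrain the partition itself, e.g.\ by first attaching each vertex (or each enclosed component) to the boundary of the innermost elementary lens containing it --- via auxiliary edges or a contraction that only changes degrees by a constant factor --- so that separating a vertex from its enclosing lens genuinely costs a cut edge. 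A secondary, smaller issue is that the Gazit--Miller edge separator is not handed to you as a Jordan curve ``crossing exactly the cut edges,'' and no such curve survives the recursion and the final greedy regrouping of pieces, so even the well-posedness of $\gamma$ needs an argument. These are missing ideas, not bookkeeping.
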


\begin{lemma}\label{lem:separated-bisection}
 For $D = \Dsep, \Dlocstar$ any multigraph $G$ in the drawing style $D$ with $n$ vertices, $e$ edges, maximum degree $\Delta(G)$, and with ${\rm cr}(G)$ crossings, the $D$-bisection width of $G$ satisfies
 \[
  {\rm b}_D(G) \le 44\sqrt{ {\rm cr}(G) + \Delta(G) \cdot e + n }.
 \]
\end{lemma}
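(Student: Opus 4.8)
The plan is to reduce the statement to Lemma~\ref{lem:branching-bisection}. Fix a multigraph $G$ in drawing style $D \in \{\Dsep, \Dlocstar\}$ with $n$ vertices, $e$ edges, maximum degree $\Delta := \Delta(G)$, and ${\rm cr}(G)$ crossings. The first and main step is to transform the given drawing of $G$ into a branching drawing $G^{*}$ of the \emph{same} multigraph, on the same vertex set, without increasing the crossing number by more than $2\Delta e$, that is, with ${\rm cr}(G^{*}) \le {\rm cr}(G) + 2\Delta e$. Since the drawing of $G$ is separated, no two parallel edges cross (a separated pair of parallel edges bounds a simple closed curve), so the only features that can prevent the drawing from being branching are crossings between \emph{adjacent} edges -- which occur only when $D = \Dsep$ -- and pairs of edges crossing more than once. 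I would remove these features one at a time by local surgery on the edge curves (rerouting a short arc of an edge, or exchanging the two arcs bounding an innermost lens), always choosing the surgery so that no two parallel edges are ever made to cross and no lens is ever emptied, so that separatedness is preserved throughout. A surgery that is not already crossing-decreasing is realized by routing the rerouted arc in a narrow corridor along the bundle of at most $\Delta$ edges incident to one endpoint of the manipulated edge, creating at most $\Delta$ new crossings; summing over the at most $2e$ edge-endpoints gives ${\rm cr}(G^{*}) \le {\rm cr}(G) + 2\Delta e$. Since the multigraph is unchanged, $G^{*}$ has the same degree sequence $d_1,\dots,d_n$ as $G$, and therefore $\sum_{i=1}^{n} d_i^{2} \le \Delta \sum_{i=1}^{n} d_i = 2\Delta e$.

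For the second step I use that the drawing style $D$ is \emph{monotone}: deleting edges destroys neither separatedness (it changes neither which edges are parallel nor which vertices lie in which region determined by a pair of parallel edges) nor local starlikeness (it creates no new crossing). Apply Lemma~\ref{lem:branching-bisection} to the branching multigraph $G^{*}$: there is a set $B$ of at most $22\sqrt{{\rm cr}(G^{*}) + \sum_{i=1}^{n} d_i^{2} + n}$ edges whose deletion splits $G^{*}$ into two parts on vertex sets $V_1, V_2$ with $|V_1|,|V_2| \ge n/5$ and with no edge of $G^{*}-B$ between them. Because $G$ and $G^{*}$ are the same multigraph, deleting the corresponding set $B$ from $G$ produces the same partition $V_1, V_2$ with no edge of $G-B$ between the parts, and the two parts $G[V_1]-B$ and $G[V_2]-B$ are subgraphs of $G$, hence in drawing style $D$ by monotonicity. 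Thus $B$ witnesses ${\rm b}_D(G) \le |B|$.

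Combining the two steps and using $\sum_{i=1}^n d_i^2 \le 2\Delta e$,
\[
  {\rm b}_D(G) \le 22\sqrt{{\rm cr}(G^{*}) + \textstyle\sum_{i=1}^{n} d_i^{2} + n}
  \le 22\sqrt{\bigl({\rm cr}(G) + 2\Delta e\bigr) + 2\Delta e + n}
  = 22\sqrt{{\rm cr}(G) + 4\Delta e + n}
  \le 44\sqrt{{\rm cr}(G) + \Delta(G)\cdot e + n},
\]
as claimed. The main obstacle is entirely in the first step: one must make the drawing branching while simultaneously (a) not changing the underlying multigraph, (b) keeping it separated -- so no pair of parallel edges may be made to cross and no lens may be emptied, which is exactly what forbids naive lens-exchanges -- and (c) keeping the number of newly created crossings within the budget $O(\Delta e)$. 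The case $D = \Dlocstar$ is the easier one, since then there are no adjacent crossings to remove and only multiple crossings between nonadjacent (hence nonparallel) edges need to be eliminated.
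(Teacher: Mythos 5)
Your Steps 2 and 3 (transferring a bisection of a redrawn copy back to $G$ via monotonicity) and the final arithmetic are fine, but Step 1 -- redrawing $G$ as a \emph{branching} multigraph $G^{*}$ on the same vertex set with the same edges -- is impossible in general, so the reduction to Lemma~\ref{lem:branching-bisection} as a black box cannot work. By Lemma~\ref{lem:edge-count-branching} every $n$-vertex branching multigraph has at most $n(n-2)$ edges, whereas a multigraph in drawing style $\Dsep$ may have up to $\binom{n}{2}(n-2)$ edges (Lemma~\ref{lem:edge-count-separated}\ref{enum:edge-count-Dsep}); for such a $G$ no branching drawing of the same abstract multigraph exists, with any number of crossings. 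The obstruction is not the one you budget for: making the drawing locally starlike forces $\Delta(G^{*})\le 2n-4$, which your $G$ need not satisfy. Even in the case $D=\Dlocstar$, where the edge count is no obstacle, your surgery argument is only asserted: eliminating a pair of nonadjacent edges that cross many times, where every lens between consecutive crossings contains vertices, requires a global rerouting whose cost is not controlled by ``at most $\Delta$ new crossings charged to an edge-endpoint,'' and you give no reason why the process terminates or why later surgeries do not undo earlier ones.

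The paper sidesteps exactly this difficulty by proving a weaker normalization: it does \emph{not} make the drawing branching, but only removes \emph{empty} lenses bounded by parts of two edges (between consecutive crossings, or between a shared endpoint and a crossing), by rerouting the more-crossed arc along the less-crossed one; this never increases the crossing number. The resulting drawing $G'$ may still have adjacent crossings and multiply-crossing pairs, but its planarization (placing a vertex at each crossing) is separated, which is all that is needed to \emph{re-run the proof} of Lemma~\ref{lem:branching-bisection} -- based on a weighted separator theorem applied to the planarization -- rather than to invoke its statement. If you want to salvage your approach, you would need to either prove the bisection bound directly for drawings whose planarization is separated (which is what the paper does), or find a different normal form that your multigraph actually admits.
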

\begin{proof}
 Let $G$ be a multigraph in drawing style $D$.
 Our goal is that introducing a new vertex at each crossing, the resulting crossing-free multigraph is separated.
 As this may fail in general, we might have to redraw $G$ first.
 
 To begin, we remove all selfcrossings of edges by simply rerouting each such edges in a crossing-free way within its original curve.
 Observe that this preserves the drawing style $D$.
 In fact, for $D = \Dsep$, no self-crossing edge has a parallel edge, and thus any pair of parallel edges remains unaltered.
 Since the number of crossings is reduced, we may assume without loss of generality that $G$ has no selfcrossings.
 
 Now suppose there is a simple closed curve $\gamma$ formed by parts of only two edges $e_1$ and $e_2$, which does not have a vertex in its interior.
 This can happen between two crossings of $e_1$ and $e_2$, or for $D \neq \Dlocstar$ between a common endpoint and a crossing of $e_1$ and $e_2$.
 Further assume that the interior of $\gamma$ is inclusion-minimal among all such curves, and note that this implies that an edge crosses $e_1$ along $\gamma$ if and only if it crosses $e_2$ along $\gamma$.
 Say $e_1$ has at most as many crossings along $\gamma$ as $e_2$.
 We then reroute the part of $e_2$ on $\gamma$ very closely along the part of $e_1$ along $\gamma$ so as to reduce the number of crossings between $e_1$ and $e_2$.
 The rerouting does not introduce new crossing pairs of edges.
 Hence, the resulting multigraph is again in drawing style $D$ and has at most as many crossings as $G$.
 Similarly, we proceed when $\gamma$ has no vertex in its exterior.
 
 Thus, we can redraw $G$ to obtain a multigraph $G'$ in drawing style $D$ with ${\rm cr}(G') \leq {\rm cr}(G)$, such that introducing a new vertex at each crossing of $G'$ creates a crossing-free multigraph that is separated.
 Moreover, if $G$ is locally starlike, then so is $G'$.
 I.e., $G'$ is in drawing style $D$ and additionally separated.
 Now, using precisely the same proof as in \cite{PT20} (for Lemma~\ref{lem:branching-bisection}), we can show that
 \[
  {\rm b}_D(G') \le 22\sqrt{ {\rm cr}(G') + \sum_{i=1}^{n} d_i^2 + n },
 \]
 where $d_1,\ldots,d_n$ denote the degrees of vertices in $G'$.
 Thus with 
 \[
  \sum_{i=1}^{n} d_i^2 \leq \Delta(G) \sum_{i=1}^n d_i \leq 2 \Delta(G) \cdot e
 \]
 the result follows.
\end{proof}

Finally, let us bound the number of edges in general (not necessarily crossing-free) multigraphs.
Again, we can utilize the result of Pach and T\'oth for the branching drawing style.

\begin{lemma}[Pach and T\'oth~\cite{PT20}]\label{lem:edge-count-branching}
 For any $n$-vertex $e$-edge, $n \geq 3$, multigraph of maximum degree $\Delta(G)$ in the branching drawing style we have $\Delta(G) \leq 2n-4$ and $e \leq n(n-2)$, and both bounds are best-possible.
\end{lemma}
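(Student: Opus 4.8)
\emph{Plan.} The whole lemma reduces to the degree bound: once $\Delta(G)\le 2n-4$ is known, $e\le n(n-2)$ follows from $2e=\sum_{v}\deg(v)\le n(2n-4)$. So fix a vertex $v$ and put $d=\deg(v)$; we may assume $d\ge 3$, since otherwise $d\le 2\le 2n-4$ as $n\ge 3$. Consider the \emph{star at $v$}, i.e.\ the sub-drawing $S_v$ consisting of $v$, its $t$ distinct neighbours, and all $d$ edges incident to $v$. Since $G$ is locally starlike, no two edges incident to $v$ cross (edges sharing one or two endpoints, parallel edges included, do not cross), so $S_v$ is a \emph{plane} connected multigraph with $t+1$ vertices and $d$ edges; by Euler's formula it has $f=d-t+1$ faces, each an open disk.

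The heart of the argument is the structure of these faces. Every edge of $S_v$ is incident to $v$, so the facial boundary walk of any face $F$ uses only such edges; hence this walk alternates between $v$ and neighbours of $v$ (after traversing an edge $vu$ and reaching $u$, the next edge is incident to $u$ and to $v$, so is again a $uv$-edge). Thus $F$ has an even boundary length $2g(F)$, where $g(F)\ge1$ is the number of times the walk passes through $v$, and $\sum_F g(F)=e(S_v)=d$. A face with $g(F)=1$ is bounded by exactly two edge-slots, which must be two \emph{distinct} parallel edges $vu$ (the only alternative, the same pendant edge twice, would force $S_v$ to be a single edge, contradicting $d\ge3$); so such an $F$ is a lens. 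By the no-empty-lens hypothesis the closed curve formed by these two parallel edges has a vertex in each of its two sides; the side equal to the face $F$ thus contains a vertex of $G$ in its interior, which is neither $v$ nor a neighbour of $v$. Distinct lens faces have disjoint interiors, so receive distinct such vertices; hence the number $\ell$ of faces with $g(F)=1$ satisfies $\ell\le n-1-t$.

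Now combine. Each face with $g(F)\ge2$ contributes at least $2$ to $\sum_F g(F)=d$, so $d\ge \ell+2(f-\ell)=2f-\ell$; substituting $f=d-t+1$ gives $\ell\ge d-2t+2$, i.e.\ $d\le \ell+2t-2$. Using $\ell\le n-1-t$ and $t\le n-1$,
\[
 d \;\le\; (n-1-t)+2t-2 \;=\; n+t-3 \;\le\; 2n-4 ,
\]
which is the desired bound, and $e\le n(n-2)$ follows as noted. The one delicate step is precisely this facial analysis of $S_v$ — checking that boundary walks alternate $v$–neighbour–$v$, that the length‑$2$ faces are honest lenses each receiving a private non‑neighbour vertex, and disposing of the degenerate small‑degree and pendant‑edge cases; everything else is Euler bookkeeping.

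For tightness, fix $n\ge3$ and let $G^{*}$ be obtained from $K_n$ by taking, with multiplicity $2$, every edge that is not in a fixed Hamiltonian cycle $C$. Every vertex is incident to $2$ single edges of $C$ and $n-3$ doubled edges, so $\Delta(G^{*})=2n-4$ and $e(G^{*})=n+2\bigl(\binom n2-n\bigr)=n(n-2)$. To realise $G^{*}$ as a branching multigraph, place $v_1,\dots,v_n$ in convex position on the equator of a sphere, draw $C$ along the equator, and draw the two copies of each remaining edge $v_iv_j$ as curves running just north and just south of the shorter equatorial path between $v_i$ and $v_j$, routing the copies leaving a common vertex at successively larger distances from the equator so that they stay crossing‑free there. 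A routine check shows this drawing is separated (each doubled edge bounds a thin bigon containing the intermediate vertices on one side and the remaining vertices on the other), locally starlike, and single‑crossing, so $G^{*}$ is branching and attains both bounds.
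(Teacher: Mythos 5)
Your argument is correct: the Euler/face-counting analysis of the plane star $S_v$ (using local starlikeness to get planarity and the no-empty-lens condition to charge each length-2 face to a private non-neighbour vertex) gives $d\le n+t-3\le 2n-4$, and the doubled $K_n$ minus a Hamiltonian cycle is the standard tight example. Note that the paper itself does not prove this lemma --- it is imported from Pach and T\'oth \cite{PT18} --- but your proof is essentially the same kind of counting argument used there, so there is nothing further to compare.
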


\begin{lemma}\label{lem:edge-count-separated}
 For any $n$-vertex $e$-edge, $n \geq 3$, multigraph $G$ in drawing style $D$ of maximum degree $\Delta(G)$ we have
 \begin{enumerate}[label = (\roman*)]
  \item $\Delta(G) \leq (n-1)(n-2)$ and $e \leq \binom{n}{2}(n-2)$ if $D = \Dsep$,\label{enum:edge-count-Dsep}
  
  
  \item $\Delta(G) \leq 2n-4$ and $e \leq n(n-2)$ if $G$ if $D = \Dlocstar$.\label{enum:edge-count-Dnac}
 \end{enumerate}
 Moreover, each bound is best-possible.
\end{lemma}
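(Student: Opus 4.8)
The plan is to treat the two drawing styles separately, in each case proving the degree bound first and then deriving the edge bound as an immediate consequence, before exhibiting matching constructions. For the upper bounds I would argue via the \emph{rotation system} at a fixed vertex $v$: fix a small circle around $v$ and record, in cyclic order, the edge-ends leaving $v$. Two parallel edges $e_1,e_2$ between $v$ and a common neighbour $u$ together bound a closed curve $\gamma$; by the separated condition $\gamma$ must have a vertex inside and a vertex outside, and in particular $\gamma$ is not ``empty'' on either side. The key observation is a \emph{crossing-free companion}: as in the proof of Lemma~\ref{lem:separated-bisection}, one may first reroute so that the drawing has the property that subdividing every crossing yields a crossing-free separated (branching) multigraph, without increasing $\mathrm{cr}$ and without changing $n$, $\Delta$, or $e$. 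So it suffices to bound $\Delta$ and $e$ for crossing-free separated multigraphs, which is exactly the branching case — giving $\Delta(G)\le 2n-4$ and $e\le n(n-2)$ for \emph{both} styles. For $\Dlocstar$ this is precisely statement~\ref{enum:edge-count-Dnac}, by Lemma~\ref{lem:edge-count-branching}.

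The genuinely new work is the larger bound in~\ref{enum:edge-count-Dsep} for $\Dsep$, where adjacent edges \emph{are} allowed to cross, so the rerouting-to-crossing-free reduction does \emph{not} apply (rerouting between a common endpoint and a crossing was explicitly excluded for $\Dlocstar$ but is fine here, yet it need not eliminate all adjacent crossings). Here I would count parallel classes directly. Between a fixed pair $u,v$ let there be $t$ parallel edges $f_1,\dots,f_t$, and consider the planar subdivision obtained by adding their mutual crossings as vertices; the faces of the arrangement of $f_1,\dots,f_t$ together with the constraint that no lens bounded by two \emph{full} edges is empty should force, for each consecutive pair in the natural order, a distinct ``separating'' vertex among the other $n-2$ vertices of $G$ either inside or outside. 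The cleanest route is: show the number of parallel edges between $u$ and $v$ is at most $n-2$ — this gives $e\le\binom n2(n-2)$ — and, summing the edges at a single vertex $v$ over its $\le n-1$ neighbours, $\Delta(G)\le(n-1)(n-2)$. I expect the main obstacle to be making ``each new parallel edge needs a fresh witness vertex'' fully rigorous: one must rule out that a single interior (or exterior) vertex certifies the separation of arbitrarily many lenses. The fix is to look at an \emph{innermost} empty lens among consecutive edges $f_j,f_{j+1}$ and use minimality exactly as in Lemma~\ref{lem:separated-bisection} to show $f_j,f_{j+1}$ can be brought together unless a vertex lies strictly between them; charging that vertex to the pair $(f_j,f_{j+1})$ and observing each vertex is charged $O(1)$ times yields the bound $t\le n-2$ (possibly after adjusting constants, but the constructions below show $n-2$ is exact).

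Finally, for best-possibility: for $\Dsep$ one takes two ``pole'' vertices $u,v$ and $n-2$ further vertices $w_1,\dots,w_{n-2}$ placed in order along the segment $uv$, and draws $n-2$ parallel $u$–$v$ edges, the $i$-th one routed so that $w_1,\dots,w_{i-1}$ lie on one side and $w_i,\dots,w_{n-2}$ on the other — every lens between consecutive edges then contains exactly one $w_i$ inside and the rest outside, so the drawing is separated; doing this for every pair of poles among all $n$ vertices realizes $e=\binom n2(n-2)$ and $\Delta=(n-1)(n-2)$. For $\Dlocstar$ the extremal example is the branching one from Lemma~\ref{lem:edge-count-branching}, which is already separated and locally starlike. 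This establishes both parts of the lemma.
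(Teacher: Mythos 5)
Your overall plan for part~\ref{enum:edge-count-Dsep} (bound each parallel class by $n-2$, then sum over neighbours and over vertices) and your extremal constructions match the paper. But there are two genuine gaps in the upper-bound arguments. First, the ``crossing-free companion'' reduction does not work: after rerouting as in Lemma~\ref{lem:separated-bisection}, placing a vertex at every crossing produces a crossing-free branching multigraph on $n+{\rm cr}(G)$ vertices, not $n$, so Lemma~\ref{lem:edge-count-branching} only yields bounds in terms of $n+{\rm cr}(G)$, which is useless here; moreover your conclusion ``$e\le n(n-2)$ for both styles'' contradicts part~\ref{enum:edge-count-Dsep} of the very lemma you are proving (where $e$ can be $\Theta(n^3)$), and you retract it only for $\Dsep$ while still resting part~\ref{enum:edge-count-Dnac} on it. The correct argument for $\Dlocstar$ is much simpler and needs no rerouting: fix $v$ and delete every edge not incident to $v$. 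All surviving edges are pairwise adjacent, so by local starlikeness none of them cross; no vertex is removed, so the star is still separated, hence branching, and Lemma~\ref{lem:edge-count-branching} gives $\deg(v)\le 2n-4$ directly, whence $e\le n\Delta/2\le n(n-2)$.

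Second, for $\Dsep$ your worry about a single vertex ``certifying arbitrarily many lenses,'' and the innermost-lens/charging machinery you propose to fix it, stem from allowing parallel edges to cross each other --- but the separated condition requires every pair of parallel edges to \emph{form a simple closed curve}, so parallel edges are pairwise non-crossing and there is no arrangement of mutual crossings to subdivide. Then $t$ pairwise non-crossing edges between $u$ and $v$ cut the plane into exactly $t$ faces: the $t-1$ lenses between consecutive edges must each contain a vertex in their interior, and the outer face must contain a vertex by the exterior condition applied to the outermost pair. Since these $t$ faces are disjoint, each of the $n-2$ remaining vertices witnesses at most one of them, giving $t\le n-2$ exactly --- no constant-factor slack, which matters because the lemma asserts the bounds are best-possible, and a charging argument that is only tight ``up to adjusting constants'' would not establish $\Delta(G)\le(n-1)(n-2)$ and $e\le\binom{n}{2}(n-2)$ as stated.
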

\begin{proof}
 Let $G$ be a fixed $n$-vertex, $n \geq 3$, $e$-edge crossing-free multigraph in drawing style $D$.
 
 \begin{enumerate}[label = (\roman*)]
  \item Let $D = \Dsep$.
   Clearly, every set of pairwise parallel edges contains at most $n-2$ edges, since every lens has to contain a vertex different from the two endpoints of these edges.
   This gives $\Delta(G) \leq (n-1)(n-2)$ and $e \leq n\Delta(G)/2 = \binom{n}{2}(n-2)$.
   To see that these bounds are tight, consider $n$ points in the plane with no four points on a circle.
   Then it is easy to draw between any two points $n-2$ edges as circular arcs such that the resulting multigraph (which has $\binom{n}{2}(n-2)$ edges) is in separating drawing style.
  
  
  \item Let $D = \Dlocstar$.
   Consider any fixed vertex $v$ in $G$ and remove all edges not incident to $v$.
   The resulting multigraph is branching and hence by Lemma~\ref{lem:edge-count-branching} $v$ has at most $2n-4$ incident edges.
   Thus $\Delta(G) \leq 2n-4$ and $e \leq n\Delta(G)/2 = n(n-2)$.
   By Lemma~\ref{lem:edge-count-branching}, these bounds are tight, even for the more restrictive branching drawing style.
 \end{enumerate}
\end{proof}


We are now ready to prove that drawing styles $\Dlocstar$ and $\Dsep$ fulfill the requirements of the generalized Crossing Lemma (Theorem~\ref{thm:general-drawing-style}), which lets us prove Theorem~\ref{thm:CL-separated-main}.

\begin{proof}[Proof of Theorem~\ref{thm:CL-separated-main}]
 Let $D = \Dlocstar$ for~\ref{enum:CL-sep-nac} and $D = \Dsep$ for~\ref{enum:CL-separated}.
 Clearly, these drawing styles are monotone, i.e., maintained when removing edges, as well as split-compatible.
 So it remains to determine the constants $k_1,k_2,k_3 > 0$ and $b > 1$ such that~\ref{enum:planar-edge-count}, \ref{enum:bisection-width}, and~\ref{enum:general-edge-count} hold for $D$.
 
 \ref{enum:planar-edge-count} holds with $k_1 = 3$ for $D = \Dlocstar, \Dsep$ by Corollary~\ref{cor:crossing-free-separated}.
 \ref{enum:bisection-width} holds with $k_2 = 44$ for $D = \Dlocstar, \Dsep$ by Lemma~\ref{lem:separated-bisection}.
 \ref{enum:general-edge-count} holds with $k_3 = 1$ and $b = 3$ for $D = \Dsep$ by Lemma~\ref{lem:edge-count-separated}\ref{enum:edge-count-Dsep}, and with $k_3 = 1$ and $b = 2$ for $D = \Dlocstar$ by Lemma~\ref{lem:edge-count-separated}\ref{enum:edge-count-Dnac}.
 
 For $b = 2$ we have $x(b) = 1/(b-1) = 1$.
 Thus Theorem~\ref{thm:general-drawing-style} for $D = \Dlocstar$ gives an absolute constant $\alpha > 0$ such that for every $n$-vertex $e$-edge separated and locally starlike multigraph we have ${\rm cr}(G) \geq \alpha e^{x(b)+2}/n^{x(b)+1} = \alpha e^3/n^2$, provided $e > (k_1+1)n = 4n$.
 Moreover, by Lemma~\ref{lem:edge-count-separated}\ref{enum:edge-count-Dnac} there are separated multigraphs with $n$ vertices and $\Theta(n^2)$ edges, any two of which cross at most once.
 Hence, the term $e^3/n^2$ is best-possible by Lemma~\ref{lem:best-possible}.

 For $b = 3$ we have $x(b) = 1/(b-1) = 0.5$.
 Thus Theorem~\ref{thm:general-drawing-style} for $D = \Dsep$ gives an absolute constant $\alpha > 0$ such that for every $n$-vertex $e$-edge separated multigraph we have ${\rm cr}(G) \geq \alpha e^{x(b)+2}/n^{x(b)+1} = \alpha e^{2.5}/n^{1.5}$, provided $e > (k_1+1)n = 4n$.
 Moreover, by Lemma~\ref{lem:edge-count-separated}\ref{enum:edge-count-Dsep} there are separated multigraphs with $n$ vertices and $\Theta(n^3)$ edges, any two of which cross at most twice.
 Hence, the term $e^{2.5}/n^{1.5}$ is best-possible by Lemma~\ref{lem:best-possible}. 
\end{proof}

\section{Other Crossing Lemma Variants}
\label{sec:new-proofs-old-variants}

We use the generalized Crossing Lemma (Theorem~\ref{thm:general-drawing-style}) to reprove existing variants of the Crossing Lemma due to Sz\'ekely~\cite{Sz97} and Pach, Spencer, and T\'oth~\cite{PST00}, respectively.

\subsection{Low Multiplicity}

Here we consider for fixed $m \geq 1$ the drawing style $D_m$ which is characterized by the absence of $m+1$ pairwise parallel edges.
In particular, any $n$-vertex multigraph $G$ in drawing style $D_m$ has at most $m\binom{n}{2}$ edges, i.e., \ref{enum:general-edge-count} holds for $D_m$ with $b = 2$ and $k_3 = m$.
Moreover, if $G$ is crossing-free on $n$ vertices and $e$ edges, then $e \leq 3mn$, i.e., \ref{enum:planar-edge-count} holds for $D_m$ with $k_1 = 3m$.

Finally, we claim that \ref{enum:bisection-width} holds for $D_m$ with $k_2$ being independent of $m$.
To this end, let $G$ be any $n$-vertex $e$-edge multigraph in drawing style $D_m$.
As already noted by Sz\'ekely~\cite{Sz97}, we can reroute all but one edge in each bundle in such a way that in the resulting multigraph $G'$ every lens is empty, no two adjacent edges cross, and ${\rm cr}(G') \leq {\rm cr}(G)$.
(Simply route every edge very closely to its parallel copy with the fewest crossings.)
Clearly, $G'$ has drawing style $D_m$.

Now, we place a new vertex in each lens of $G'$, giving a multigraph $G''$ with $n'' \leq n+e$ vertices and $e'' = e$ edges, which is in the separated drawing style $D$.
By Lemma~\ref{lem:separated-bisection}, there is an absolute constant $k$ such that
\[
 b_D(G'') \leq k \sqrt{ {\rm cr}(G'') + \Delta(G'') \cdot e'' + n'' }.
\]
As $b_{D_m}(G) \leq b_D(G'')$, ${\rm cr}(G'') = {\rm cr}(G') \leq {\rm cr}(G)$, $\Delta(G'') = \Delta(G)$, and $\Delta(G) + 1 \leq 2\Delta(G)$ we conclude that
\[
 b_{D_m}(G) \leq 2k \sqrt{ {\rm cr}(G) + \Delta(G) \cdot e + n}.
\]
In other words, \ref{enum:bisection-width} holds for drawing style $D_m$ with an absolute constant $k_2 = 2k$ that is independent of~$m$.

Note that for $b = 2$, we have $x(b) = 1$.
We conclude with Theorem~\ref{thm:general-drawing-style} that there is an absolute constant $\alpha'$ such that for every $m$ and every $n$-vertex $e$-edge multigraph $G$ in drawing style $D_m$ we have
\[
 {\rm cr}(G) \geq \alpha' \cdot \frac{1}{k_3^{x(b)}} \cdot \frac{e^{x(b)+2}}{n^{x(b)+1}} = \alpha' \cdot \frac{e^3}{mn^2}, \qquad \text{provided } e > (3m+1)n,
\]
which is the statement of Theorem~\ref{thm:multiplicity-crossing-lemma}; except that we slightly improved the assumption of $e > 5mn$ in Theorem~\ref{thm:multiplicity-crossing-lemma} to $e > (3m+1)n$.

\subsection{High Girth}

\begin{oldtheorem}[Pach, Spencer, T\'oth~\cite{PST00}]\label{thm:girth-crossing-lemma}
 For any $r \geq 1$ there is an absolute constant $\alpha_r > 0$ such that for any $n$-vertex $e$-edge graph $G$ of girth larger than $2r$ we have
 \[
  {\rm cr}(G) \geq \alpha_r \cdot \frac{e^{r+2}}{n^{r+1}}, \qquad \text{provided } e > 4n.
 \]
\end{oldtheorem}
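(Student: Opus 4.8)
The plan is to derive Theorem~\ref{thm:girth-crossing-lemma} as another instance of the generalized Crossing Lemma (Theorem~\ref{thm:general-drawing-style}), taking the drawing style $D$ to be the (drawing-independent) predicate ``the underlying abstract graph is simple of girth larger than $2r$.'' This is clearly monotone, since deleting an edge cannot decrease the girth or destroy simplicity, and it is split-compatible, since a vertex split neither creates parallel edges nor shortens a cycle (a cycle through the split vertex becomes a longer path or disappears). So it remains to verify the three numerical hypotheses \ref{enum:planar-edge-count}, \ref{enum:bisection-width}, \ref{enum:general-edge-count} and to identify the parameter $b$.

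First I would handle \ref{enum:general-edge-count}: a standard Moore-type bound gives that any $n$-vertex graph of girth larger than $2r$ has $e = O(n^{1+1/r})$ edges, so \ref{enum:general-edge-count} holds with $b = 1 + 1/r$ and some absolute constant $k_3 = k_3(r)$; crucially $x(b) = 1/(b-1) = r$, which is exactly the exponent appearing in the target bound $e^{r+2}/n^{r+1}$. For \ref{enum:planar-edge-count}, a crossing-free (i.e.\ planar) graph of girth larger than $2r$ has at most $\tfrac{r}{r-1}(n-2) \le 2n$ edges by the girth-refined Euler bound, so \ref{enum:planar-edge-count} holds with $k_1 = 2$ (the statement asks for $e > 4n$, consistent with $e > (k_1+1)n = 3n$, so this is fine after possibly enlarging the threshold constant). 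For \ref{enum:bisection-width}, I would invoke the graph bisection theorem of Pach, Spencer and T\'oth (the one underlying Lemma~\ref{lem:branching-bisection} in the simple-graph case), which gives $b(G) = O(\sqrt{{\rm cr}(G) + \sum_i d_i^2 + n})$; bounding $\sum_i d_i^2 \le \Delta(G)\sum_i d_i \le 2\Delta(G)\cdot e$ exactly as in the proof of Lemma~\ref{lem:separated-bisection} yields the required form with an absolute $k_2$ (here the style is already monotone on crossing-split graphs, so no rerouting step is needed). Feeding $b = 1 + 1/r$, $x(b) = r$ into Theorem~\ref{thm:general-drawing-style} then gives ${\rm cr}(G) \ge \alpha_r \cdot e^{r+2}/n^{r+1}$ for $e$ above the appropriate linear threshold, and absorbing the constant lets us state it for $e > 4n$.

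The main obstacle is a mismatch at the boundary: Theorem~\ref{thm:general-drawing-style} as stated requires $b > 1$ and a constant $k_1$, which is fine for every fixed $r \ge 2$, but the case $r = 1$ degenerates ($b = 2$, $x(b) = 1$, girth larger than $2$ just means ``simple graph''), where the claimed bound $e^3/n^2$ is simply the ordinary Crossing Lemma; this case is already covered and needs no new argument. A subtler point is the dependence of the final constant $\alpha_r = \alpha_b k_2^{-2} k_3^{-x(b)}$ on $r$: since $k_3$ depends on $r$ and $x(b) = r$, the constant degrades like $k_3(r)^{-r}$, which is acceptable because the theorem only asks for \emph{some} $\alpha_r > 0$ for each fixed $r$, but it does mean the bound is not uniform in $r$ — worth a remark but not an obstruction. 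Finally one should check that the girth hypothesis is genuinely used only through \ref{enum:planar-edge-count} and \ref{enum:general-edge-count} (it is), so that no circularity with the generalized lemma arises.
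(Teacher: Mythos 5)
Your proposal is correct and follows essentially the same route as the paper: define the drawing style by the girth condition, get \ref{enum:planar-edge-count} from Euler's formula, \ref{enum:bisection-width} from the Pach--Spencer--T\'oth bisection bound via $\sum_i d_i^2 \le 2\Delta(G)e$, and \ref{enum:general-edge-count} from the Moore-type bound $e = O(n^{1+1/r})$, so that $b = 1+1/r$ and $x(b)=r$ feed into Theorem~\ref{thm:general-drawing-style}. (Only cosmetic slips: the girth-refined Euler bound is $e \le \tfrac{g}{g-2}(n-2)$ with $g = 2r+1$, not $\tfrac{r}{r-1}(n-2)$, and the case $r=1$ does not actually degenerate since $b=2>1$; neither affects the argument.)
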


Here we consider for fixed $r \geq 1$ the drawing style $D_r$ which is characterized by the absence of cycles of length at most $2r$.
In particular, any multigraph $G$ in drawing style $D_r$ has neither loops nor multiple edges.
Hence \ref{enum:planar-edge-count} holds for drawing style $D_r$ with $k_1 = 3$.
Secondly, drawing style $D_r$ is more restrictive than the separated drawing style and thus also \ref{enum:bisection-width} holds for $D_r$.
Moreover, any $n$-vertex graph in drawing style $D_r$ has $O(n^{1+1/r})$ edges~\cite{AHL02}, i.e., \ref{enum:general-edge-count} holds for $D_r$ with $b = 1+1/r$.
Finally, $D_r$ is obviously a monotone and split-compatible drawing style.

Thus with $x(b) = 1/(b-1) = r$, Theorem~\ref{thm:general-drawing-style} immediately gives the existence of an absolute constant $\alpha_r$ such that
\[
 {\rm cr}(G) \geq \alpha_r \cdot \frac{e^{r+2}}{n^{r+1}}, \qquad \text{provided } e > 4n
\]
for any $n$-vertex $e$-edge multigraph in drawing style $D_r$, which is the statement of Theorem~\ref{thm:girth-crossing-lemma}.

\section{Conclusions}
\label{sec:conclusions}

Let $G$ be a topological multigraph with $n$ vertices and $e > 4n$ edges.
We have shown that ${\rm cr}(G) \geq \alpha e^3/n^2$ if $G$ is separated and locally starlike, which generalizes the result for branching multigraphs~\cite{PT20}, which are additionally single-crossing.
Moreover, if $G$ is only separated, then the lower bound drops to ${\rm cr}(G) \geq \alpha e^{2.5}/n^{1.5}$, which is tight up to the constant factor, too.
It remains open to determine a best-possible Crossing Lemma for separated and single-crossing multigraphs.
This would follow from our generalized Crossing Lemma (Theorem~\ref{thm:general-drawing-style}), where the missing ingredient is the determination of the smallest $b$ such that every separated and single-crossing multigraph $G$ on $n$ vertices has $O(n^b)$ edges.
It is easy to see that the maximum degree $\Delta(G)$ may be as high as $(n-1)(n-2)$, but we suspect that any such $G$ has $O(n^2)$ edges.
This has been recently verified up to a logarithmic factor, see~\cite{FPS21}.

\section*{Acknowledgements}
This project initiated at the Dagstuhl seminar 16452 ``Beyond-Planar Graphs: Algorithmics and Combinatorics,'' November 2016.
We would like to thank all participants, especially Stefan Felsner, Vincenzo Roselli, and Pavel Valtr, for fruitful discussions.

\bibliographystyle{abbrv}
\bibliography{lit}

\end{document}